\numberwithin{equation}{section}
\numberwithin{figure}{section}
\theoremstyle{plain}
\newtheorem{thm}{\protect\theoremname}[section]
  \theoremstyle{remark}
  \newtheorem{rem}[thm]{\protect\remarkname}
  \theoremstyle{definition}
  \newtheorem{defn}[thm]{\protect\definitionname}
  \theoremstyle{plain}
  \newtheorem{lem}[thm]{\protect\lemmaname}
  \theoremstyle{plain}
  \newtheorem{prop}[thm]{\protect\propositionname}
  \theoremstyle{remark}
  \newtheorem{observation}[thm]{\protect\observationname}
  \theoremstyle{remark}
  \newtheorem{claim}[thm]{\protect\claimname}
  \theoremstyle{remark}
  \newtheorem{ex}[thm]{\protect\examplename}
 \providecommand{\claimname}{Claim}
  \providecommand{\definitionname}{Definition}
  \providecommand{\lemmaname}{Lemma}
  \providecommand{\observationname}{Observation}
  \providecommand{\propositionname}{Proposition}
  \providecommand{\remarkname}{Remark}
  \providecommand{\examplename}{Example}
\providecommand{\theoremname}{Theorem}
\title{Self-diffusion Coefficient in the Kob-Andersen Model}
\author{Anatole Ertul}
\email{ertul@math.univ-lyon1.fr}
\author{Assaf Shapira}
\email{assafshap@gmail.com}
\global\long\def\One{\mathds{1}}
\global\long\def\zz{\mathbb{Z}}
\global\long\def\rr{\mathbb{R}}
\global\long\def\nn{\mathbb{N}}
\global\long\def\l{\ell}
\global\long\def\BP{\operatorname{BP}}
\def\restriction#1#2{\mathchoice
              {\setbox1\hbox{${\displaystyle #1}_{\scriptstyle #2}$}
              \restrictionaux{#1}{#2}}
              {\setbox1\hbox{${\textstyle #1}_{\scriptstyle #2}$}
              \restrictionaux{#1}{#2}}
              {\setbox1\hbox{${\scriptstyle #1}_{\scriptscriptstyle #2}$}
              \restrictionaux{#1}{#2}}
              {\setbox1\hbox{${\scriptscriptstyle #1}_{\scriptscriptstyle #2}$}
              \restrictionaux{#1}{#2}}}
\def\restrictionaux#1#2{{#1\,\smash{\vrule height .8\ht1 depth .85\dp1}}_{\,#2}}
\begin{document}

\begin{abstract}
The Kob-Andersen model is a fundamental example of a kinetically constrained lattice gas, that is, an interacting particle system with Kawasaki type dynamics and kinetic constraints. In this model, a particle is allowed to jump when sufficiently many neighboring sites are empty. We study the motion of a single tagged particle and in particular its convergence to a Brownian motion. Previous results showed that the path of this particle indeed converges in diffusive time-scale, and the purpose of this paper is to study the rate of decay of the self-diffusion coefficient for large densities. We find upper and lower bounds matching to leading behavior.
\end{abstract}

\maketitle

\section{Introduction}

Kinetically constrained lattice gases is a family of models divised by physicists in order to study glassy systems, that could be seen as the conservative version of kinetically constrained spin models, see e.g. \cite{KA, RS03, GST}. In this paper we study one such model -- the $(k,d)$-Kob-Andersen model. It is a Markov process living on the graph $\zz^d$, that depends on a parameter $k \ge 2$. Each site of $\zz^d$ may contain at most one particle, that can jump to an empty neighboring site if it has at least $k$ empty neighbors both before and after the jump. When this constraint is satisfied, the particle jumps at rate $1$. For any $q\in(0,1)$, this process is reversible with respect to the product Bernoulli measure of parameter $1-q$.\\

When $q$ is small, the constraint is difficult to satisfy, resulting in a significant lengthening of time scales related to this process. Indeed, for a particle to move around it must wait for sufficiently many vacancies to arrive at its vicinity. This fact gives rise to two important length scales of this model -- for these vacancies to propagate and reach the particle they must form a droplet of some typical length scale $\ell$. In the case $k=d=2$, for example, for a droplet to advance it must find a close by vacancy, which is typically possible for $\ell \approx 1/q$. 
The second scale is the distance $L$ within which such a droplet can be found, i.e., $L\approx q^{-\ell}$, which in the case $k=d=2$ is (to leading behavior) $e^{-1/q}$.
For higher values of $k$ and $d$ the mechanism which allows a droplet to move is based on the fact that a $d-1$ dimensional layer parallel to the droplet could evolve like a $(k-1,d-1)$-Kob-Andersen model since one of the $k$ required empty neighbors comes from the droplet. Particles are thus allowed to move in this layer if its size reaches the scale $L$ of the $(k-1,d-1)$ dynamics, which thus equals $\ell$ of the $(k,d)$ dynamics. The details of this argument can be found in \cite{BFT} (see also \cite{SphD}).\\ 
%Indeed, \cite{} show that the relaxation time grows as an iterated exponential when $q$ is small.

At large times, the path of a marked particle converges to a Brownian motion with a coefficient called the self-diffusion, which is the subject of this paper. In \cite{BT18} it has been proven that this coefficient is strictly positive for all $q\in(0,1)$, in contrast to the conjecture in the physics literature that below some non-zero critical $q$ the path of tagged particles is no longer diffusive.
In this work we find the dependence of this diffusion coefficient in $q$, showing that it decays very fast when $q$ is small, in a similar way to the spectral gap \cite{MST}.\\

We start by introducing the model and our result, and then prove a lower and an upper bound on the diffusion coefficient. The main tool we use is a variation formula of \cite{S90} for the diffusion coefficient.
In order to bound it from below, as in \cite{BT18}, we compare the Kob-Andersen dynamics with a random walk on an infinite percolation cluster. The upper bound is obtained by identifying an appropriate test function related to the bootstrap percolation, a process which is closely related to the Kob-Andersen model.

\section{Model and main result}

The model we study here is defined on the lattice $\mathbb{Z}^d$. We denote by $(e_1,\dots,e_d)$ the standard orthonormal basis. The set of configurations is $\Omega = \{0,1\}^{\mathbb{Z}^d}$, where $0$ stands for an empty site and $1$ for an occupied site. Given two sites $x,y$ we denote $x \sim y$ if they are nearest neighbors. We also denote by $[L]^d$ the cube $[1,L]^d$.\\

Fix and integer $k \in [2,d]$. For $\eta \in \Omega$ and $x\sim y$, we define the local constraint for the edge $xy$ by

\begin{equation}\label{def:constraint}
c_{xy}(\eta)=\begin{cases}
1 & \text{if } \sum_{z \sim x, z \ne y} (1 - \eta(z)) \ge k-1 \,\, \text{and} \,\, \sum_{z \sim y, z\ne x} (1 - \eta(z)) \ge k - 1,\\
0 & \text{otherwise}.
\end{cases}
\end{equation}
For $V\subset \zz^d$, and $\eta \in \{0,1\}^V$, we define the constraint $c_{xy}(\eta) = c_{xy}(\eta')$ where $\eta' \in \Omega$ equal $\eta$ on $V$ and is entirely occupied elsewhere.

The generator of our Markov process describing the KA dynamics operating on a local function $f$ is given by:
\begin{equation}\label{generator}
\mathcal{L}f(\eta) = \sum_{x \in \mathbb{Z}^d} \sum_{y \sim x} c_{xy}(\eta)\eta(x)(1-\eta(y))\left[ f(\eta^{xy}) - f(\eta) \right],
\end{equation}
where $\eta^{xy}$ is the configuration equal to $\eta$ except that $\eta^{xy}(x) = \eta(y)$ and $\eta^{xy}(y) = \eta(x)$. In words, the only way a configuration can change is a particle (i.e. an occupied site) "jumping" to a neighboring empty site provided each of those site have at least $(k-1)$ other empty neighbors. We call this transition a \emph{legal KA-$k$f transition}, or simply \emph{legal transition} whenever the context allows it.\\

Observe that, from Formula \ref{generator}, this process is reversible with respect to the product measure $\mu := \otimes_{x \in \mathbb{Z}^d} \text{Ber}(1-q)$ for any $q \in (0,1)$.\\

We now consider the trajectory of a tagged particle. Let $\mu_0 = \mu(\cdot|\eta(0)=1)$ and, under the initial distribution $\mu_0$, $X_t$ the position at time $t$ of the particle initially at $0$. More precisely, $(X_t,\eta_t)_{t \ge 0}$ is the Markov process with generator:

\begin{multline}
    \mathcal{L}_{\text{tagged}}f(X,\eta) = \sum_{\substack{y \in \zz^d \\ y\ne X}}\sum_{z \sim y} c_{yz}(\eta)\eta(y)(1-\eta(z))\left[ f(X,\eta^{yz}) - f(X,\eta) \right] \\
    + \sum_{y \sim X} c_{Xy}(\eta)\eta(X)(1-\eta(y))\left[f(y,\eta^{Xy}) - f(X,\eta)\right]
\end{multline}

The following classic result gives a convergence for $X_t$:
\begin{thm}\cite{S90,KV}\label{cvg}
For any $q \in (0,1)$, there exists a non-negative $d\times d$ matrix $D(q)$ such that 
$$\varepsilon X_{\varepsilon^{-2}t} \underset{\varepsilon \rightarrow 0}{\longrightarrow} \sqrt{2D(q)}B_t,$$
where $B_t$ is a $d$-dimensional Brownian motion process and the convergence holds in the sense of weak convergence of path measures on $D(\mathbb{R}_+,\mathbb{R}^d)$. Furthermore, $D(q)$ is characterized by the following variational formula:
\begin{equation}
\forall u \in \mathbb{R}^d, \ u\cdot Du=\inf_{f}\mu_{0}\left[\sum_{x\neq0}\sum_{y\sim x}c_{xy}\left(f\left(\eta^{xy}\right)-f\left(\eta\right)\right)^{2}+\sum_{y\sim0}c_{0y}\left(u\cdot y+f\left(\tau_{y}\eta^{0y}\right)-f\left(\eta\right)\right)^{2}\right]\label{eq:variational_principle}, 
\end{equation}
where the infimum is taken over all local functions on $\Omega$, and $(\tau_y \eta)$ is the configuration defined by $(\tau_y \eta)(z) = \eta(z-y)$ for all $z\in \zz^d$.

\end{thm}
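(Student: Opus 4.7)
The proof follows the Kipnis-Varadhan scheme adapted to tagged particles in reversible lattice gases, as in \cite{S90, KV}. The central object is the environment seen from the tagged particle, $\xi_t := \tau_{-X_t}\eta_t$, which takes values in $\Omega_0 := \{\eta \in \Omega : \eta(0)=1\}$. A direct computation shows that $(\xi_t)_{t\ge 0}$ is a Markov process whose generator splits into two pieces: jumps not touching the origin, which are straightforward translations of $\mathcal{L}$, and jumps at the origin, which produce a combined transition $\xi \mapsto \tau_y \xi^{0y}$ (the particle moves and the frame is re-centered). Detailed balance holds separately for each piece, the second because $\xi \mapsto \tau_y\xi^{0y}$ is a $\mu_0$-preserving involution composed with the symmetric rate $c_{0y}$. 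Hence $\mu_0$ is reversible for $\xi_t$.

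Writing $u \cdot X_t$ as a sum of its jumps and applying the martingale problem yields the decomposition $u\cdot X_t = M_t^u + \int_0^t \psi_u(\xi_s)\, ds$ with $\psi_u(\xi) := \sum_{y\sim 0}c_{0y}(\xi)(1-\xi(y))\, u\cdot y$ and $M^u$ a square-integrable martingale. By the reflection symmetry $y\mapsto -y$ of both $\mu_0$ and the constraints $c_{0y}$, one has $\mu_0[\psi_u]=0$, so $u\cdot X_t$ is a centered additive functional of a reversible Markov process. The main obstacle is establishing ergodicity of $\xi_t$ under $\mu_0$ for all $q \in (0,1)$: this is delicate for kinetically constrained models because of potentially frozen clusters, but almost every configuration contains mobile vacancy droplets that allow the tagged particle to reach any prescribed local configuration, yielding the required irreducibility (see the arguments used in \cite{BT18}).

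Granted ergodicity, the Kipnis-Varadhan central limit theorem for additive functionals of reversible processes provides the invariance principle $\varepsilon X_{\varepsilon^{-2}t}\Rightarrow \sqrt{2D(q)}\,B_t$. To identify $D(q)$, one uses the following trick: for any local function $f$ on $\Omega_0$, the process $u\cdot X_t - f(\xi_t)+f(\xi_0)$ differs from $u \cdot X_t$ by a bounded quantity, and hence has the same asymptotic variance per unit time; on the other hand, after adding the compensator of $f(\xi_t)$ it can be rewritten as a pure-jump martingale whose predictable quadratic variation grows linearly with instantaneous rate
\[
\mu_0\!\left[\sum_{x\ne 0}\sum_{y\sim x}c_{xy}\bigl(f(\eta^{xy})-f(\eta)\bigr)^2 + \sum_{y\sim 0}c_{0y}\bigl(u\cdot y + f(\tau_y\eta^{0y})-f(\eta)\bigr)^2\right].
\]
Minimizing this rate over $f$ gives $u\cdot D u$, which is exactly \eqref{eq:variational_principle}: the two sums correspond respectively to transitions away from the origin and to jumps of the tagged particle itself, the latter contributing the displacement term $u\cdot y$ inside the gradient.
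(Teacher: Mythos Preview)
The paper does not prove Theorem~\ref{cvg}: it is stated as a classical result and attributed directly to \cite{S90,KV}, with no argument given beyond the citation. Your sketch correctly outlines the standard Kipnis--Varadhan/Spohn framework that those references develop, so in that sense you are reproducing the ``paper's approach'' by reproducing the cited literature.

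That said, two points in your sketch deserve tightening. First, invoking the Kipnis--Varadhan CLT requires checking that the local drift $\psi_u$ lies in $H_{-1}$, i.e.\ that $\langle \psi_u, (-\mathcal{L}_{\text{env}})^{-1}\psi_u\rangle_{\mu_0}<\infty$; you do not mention this, and while it follows from the same variational structure that gives \eqref{eq:variational_principle}, it is the actual analytic content of the theorem and should not be skipped. Second, your derivation of the variational formula shows only that $u\cdot Du$ is bounded above by the expression in \eqref{eq:variational_principle} for every local $f$ (hence by the infimum); the matching lower bound requires knowing that local functions are dense in the domain of the Dirichlet form, which is again part of the Kipnis--Varadhan machinery. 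Finally, ergodicity of the environment process under $\mu_0$ is indeed needed and nontrivial for kinetically constrained models; you are right to flag it and to point to \cite{BT18}, though strictly speaking the invariance principle with a possibly degenerate $D(q)$ does not require it.
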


\begin{rem}
A priori, the diffusion coefficient is a matrix. In our case, however,
the model is invariant under permutation and inversion of the standard
basis vectors. This forces the diffusion matrix to be scalar, equal
to any arbitrary diagonal element.
\end{rem}

In \cite{BT18}, it was first proved that $D(q) > 0$ for all $q > 0$. We will give in this paper the appropriate scale of $D(q)$ when $q \rightarrow 0$. The main result is the following:

\begin{thm}\label{mainthm}
Let $d \ge 2$, $k\in [2,d]$.
For $q \in (0,1)$ let $D(q)$ the diffusion coefficient given by Theorem \ref{cvg}. Then for $q$ sufficiently small:\\

if $k = 2$: 
\[
1 / \exp\left(c\left(\log\nicefrac{1}{q}\right)^d q^{-\frac{1}{d-1}}\right)\le D(q)\le 1 / \exp\left(c' q^{-\frac{1}{d-1}} \right),
\]

if $k\ge3$: 

\[
1 / \exp_{(k-1)}\left(cq^{-\frac{1}{d-k+1}}\right)\le D(q)\le 1 / \exp_{(k-1)}\left(c'q^{-\frac{1}{d-k+1}}\right),
\]
where $\exp_{(k-1)}$ denotes the exponential function iterated $(k-1)$ times, and $c,c'$ are constants only dependent on $k$ and $d$.
\end{thm}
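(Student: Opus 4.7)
The plan is to apply the variational characterization \eqref{eq:variational_principle} of $D(q)$ in two complementary directions. Since the formula expresses $u\cdot Du$ as an infimum over local functions, an upper bound follows from exhibiting an explicit test function whose energy is small, whereas a lower bound requires comparing the KA tagged-particle dynamics to a process whose diffusivity can be controlled directly. In both cases the length scales entering the bounds are dictated by the bootstrap percolation analysis of \cite{BFT, MST}, which identifies the droplet sizes $\ell$ and propagation distances $L$ quoted in the introduction.

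\textbf{Upper bound.} The strategy is to design a local test function $f = u \cdot \Phi$, where $\Phi \colon \Omega \to \rr^d$ plays the role of a ``virtual position'' of the tagged particle. The support of $\Phi$ is chosen to match the bootstrap percolation scale $L$ of the $(k,d)$-KA model (so $L \sim \exp(c q^{-1/(d-1)})$ when $k=2$ and $L \sim \exp_{(k-1)}(c q^{-1/(d-k+1)})$ when $k \ge 3$), in such a way that whenever a boundary constraint $c_{0y}(\eta) = 1$ is activated -- which forces a mobile droplet to sit near the origin -- the increment $\Phi(\tau_y \eta^{0y}) - \Phi(\eta)$ approximately cancels $-y$. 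The boundary term of \eqref{eq:variational_principle} then contributes only on the rare event that such a droplet is present, whose probability is $\sim 1/L^d$ by the bootstrap percolation estimates of \cite{BFT, MST}, yielding the announced upper bound. An analogous argument, exploiting that a bulk legal transition $c_{xy}=1$ also requires a local droplet, controls the bulk Dirichlet term.

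\textbf{Lower bound.} Following \cite{BT18}, I would partition $\zz^d$ into boxes of side $L$ matching the droplet scale, declare a box ``good'' if it contains a fully mobile droplet capable of propagation, and invoke the percolation input of \cite{BFT, MST} to conclude that good boxes form an infinite cluster $\mathcal{C}$ of positive density. Within $\mathcal{C}$, one produces explicit sequences of legal KA transitions that shift the tagged particle between adjacent good boxes at rate of order $L^{-d}$. A Dirichlet form / variance comparison between the KA tagged particle and a continuous-time random walk on $\mathcal{C}$, together with the invariance principle for random walk on supercritical percolation clusters, then gives the claimed lower bound.

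\textbf{Main obstacle.} The principal difficulty is the iterated regime $k \ge 3$: the notion of ``mobile droplet'' is itself recursively defined via an effective $(k-1,d-1)$-KA dynamics living on a $(d-1)$-dimensional slice, so both the construction of $\Phi$ (upper bound) and the explicit legal paths within $\mathcal{C}$ (lower bound) must be built by induction on $k$. Propagating the probabilistic and Dirichlet-form estimates through this induction while matching the leading constant in the iterated exponential is the main technical work; the extra logarithmic factor appearing in the $k=2$ lower bound already signals that this matching is delicate even at the base of the recursion.
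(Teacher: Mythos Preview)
Your lower-bound sketch is essentially the paper's argument: coarse-grain at scale $L$, build a finite-range percolation of ``block-connected'' vertices that dominates a supercritical Bernoulli process, and compare the KA Dirichlet form to the random-walk-on-cluster form via an explicit $T$-step move. The quantitative output is $D(q)\ge (T^{2}\,2^{\mathrm{Loss}(M)})^{-1}D_{\mathrm{aux}}$, and the recursive combinatorics of \cite{MST} supply the bounds on $T$ and $\mathrm{Loss}(M)$ that produce the iterated exponential.

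The upper-bound plan, however, contains a genuine gap. Your mechanism hinges on the claim that ``$c_{0y}(\eta)=1$ forces a mobile droplet to sit near the origin'', with the analogous claim for bulk edges. This is false: the constraint $c_{xy}=1$ is a purely local event (it asks only for $k-1$ empty neighbours of $x$ and of $y$) and occurs with probability of polynomial order in $q$, not $1/L^{d}$. In particular, nothing prevents the bulk sum $\sum_{x\ne 0}\sum_{y\sim x}\mu_{0}\bigl[c_{xy}(f(\eta^{xy})-f(\eta))^{2}\bigr]$ from having order $L^{d}$ non-vanishing terms; with $|\Phi|\lesssim L$ this sum is then huge rather than small. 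You need a reason why $f(\eta^{xy})=f(\eta)$ for \emph{almost all} legal transitions, and the presence or absence of a droplet does not provide one.

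The paper's test function is built on a different idea and lives on the smaller scale $\ell$, not $L$. One sets $f(\eta)$ equal to the first coordinate of the rightmost site in the bootstrap-percolation cluster of the origin inside $[-\ell,\ell]^{d}$. The crucial observation is that $\mathrm{BP}^{\infty}$ is \emph{invariant under legal KA transitions} whose constraint is read entirely inside the box; hence $f(\eta^{xy})=f(\eta)$ for every interior edge, and only the $O(\ell^{d-1})$ boundary edges can be pivotal---and then only on the event $\mathcal B$ that the cluster reaches $\|\cdot\|_\infty$-distance $\ell-1$, which by \cite{CM} has probability $\le e^{-\gamma\ell}$. For the $y\sim 0$ term, outside $\mathcal B$ the cluster for $\tau_{y}\eta^{0y}$ is exactly the $(-y)$-translate of the cluster for $\eta$, so $e_{1}\cdot y + f(\tau_{y}\eta^{0y})-f(\eta)=0$ identically. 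This BP-invariance, not any rarity of the constraint itself, is what kills both the bulk and boundary contributions; it is the idea missing from your proposal.
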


\section{Proof of the lower bound}

The proof of the lower bound will closely follow the proof of \cite{BT18}, sections 4 and 5. However, we use more refined combinatorial properties of the KA model in order to obtain the correct scaling.\\

Throughout the proof, $c$ and $\lambda$ denote generic positive constants which only depends on $d$ and $k$.

We start by defining a coarse grained version of the lattice, depending on two scales:

%\begin{align*}
%\l & =\exp_{(k-2)}\left(cq^{-\frac{1}{d-k+1}}\right)\quad k\ge3,\\
%\l & =2\log\nicefrac{1}{q}\,q^{-\frac{1}{d-1}}\quad k=2,\\
%L & =q^{-\l-1}.
%\end{align*}

\begin{align}\label{def:l}
 \l &= \begin{cases} c\log\left(\nicefrac{1}{q}\right)\,q^{-\frac{1}{d-1}} & \text{if} \quad k=2, \\
 c\exp_{(k-2)}\left(q^{-\frac{1}{d-k+1}}\right) & \text{if} \quad k\ge3,\end{cases}\\
 \label{def:L} L &=   q^{-c \l}  .
\end{align}

\begin{defn}
A \textit{block} is a set of the form $\left(L+1\right)i+\left[L\right]^{d}$,
$i\in\zz^{d}$. A block is divided in \textit{boxes}, which are sets
of the form $\left(L+1\right)i+\l a+\left[\l\right]^{d}$, $i\in\zz^{d},a\in [L/\l]$.\\
We also call \emph{external face} of a box any connected component of the set of vertices at (graph) distance $1$ from the box.
See Figure \ref{fig:blocks_and_boxes}.
\end{defn}

\begin{figure}
\begin{tikzpicture}[scale=0.3, every node/.style={scale=0.6}]	
	\tikzset{small|/.style={decoration={markings,mark=at position 1 with %
    {\arrow[scale=0.6]{|}}},postaction={decorate}}}
    
	\draw[step=1, lightgray, very thin] (-0.5,-0.5) grid +(18,17);
	\draw[step=15, black, very thick,shift={(1,1)}] (0,0) grid +(15,15);
	\draw[step=15, black, very thick,shift={(0,1)}] (0,0) grid +(-0.5,15);
	\draw[step=15, black, very thick,shift={(1,0)}] (0,0) grid +(15,-0.5);
	\draw[step=15, black, very thick] (0,0) grid +(-0.5,-0.5);
	\draw[step=15, black, very thick, shift={(17,1)}] (0,0) grid +(0.5,15);
	\draw[step=15, black, very thick, shift={(17,0)}] (0,0) grid +(0.5,-0.5);
	\draw[step=3, black, shift={(1,1)}] (0,0) grid +(15,15);
	\draw[step=3, black, shift={(0,1)}] (0,0) grid +(-0.5,15);
	\draw[step=3, black, shift={(1,0)}] (0,0) grid +(15,-0.5);
	\draw[step=3, black, shift={(17,1)}] (0,0) grid +(0.5,15);

	\draw[|-] (18.2,1) -- (18.2,8);
	\draw (18.2,8.5) node{$L$};
	\draw[-|] (18.2,9) -- (18.2,16);
	
	\draw[-|] (5,16.7) -- (4,16.7);
	\draw (5.5,16.7) node{$\ell$};
	\draw[-|] (6,16.7) -- (7,16.7);
	
	\draw (0.5,0.5) node{$i$};
	\draw (16.5,0.5) node{$j$};
	
	\fill [color=OliveGreen, opacity=0.2] (0,1) rectangle +(1,3);
	\fill [color=OliveGreen, opacity=0.2] (1,0) rectangle +(3,1);
	\fill [color=OliveGreen, opacity=0.2] (13,0) rectangle +(3,1);
	\fill [color=OliveGreen, opacity=0.2] (16,1) rectangle +(1,3);
	
	\fill [color=RoyalBlue, opacity=0.2] (1,1) rectangle +(3,3);
	\fill [color=RoyalBlue, opacity=0.2] (1,4) rectangle +(3,3);
	\fill [color=RoyalBlue, opacity=0.2] (1,7) rectangle +(3,3);
	\fill [color=BrickRed, opacity=0.2] (4,7) rectangle +(3,3);
	\fill [color=RoyalBlue, opacity=0.2] (7,7) rectangle +(3,3);
	\fill [color=RoyalBlue, opacity=0.2] (10,7) rectangle +(3,3);
	\fill [color=RoyalBlue, opacity=0.2] (10,4) rectangle +(3,3);
	\fill [color=RoyalBlue, opacity=0.2] (13,4) rectangle +(3,3);
	\fill [color=RoyalBlue, opacity=0.2] (13,1) rectangle +(3,3);
	
\end{tikzpicture}

\caption{\label{fig:blocks_and_boxes}Block-connected vertices $i$ and $j$. The boxes in blue are $(d,k)$-good, and the external faces in green are $(d-1,k-1)$-good. The red box is $(d,k)$-frameable.}

\end{figure}
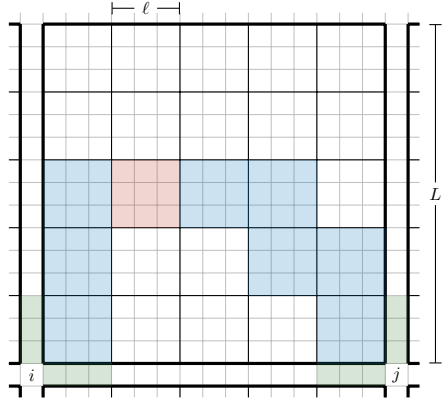

We think of the blocks' corners $(L+1)i$ as vertices of a graph $\zz_\l^d=(L+1)\zz^{d}$. More precisely, the graph $\zz_\l^d$ is defined as follows :
\begin{itemize}
    \item Its vertices are sites of the form $(L+1)i$, $i\in\zz^d$.
    \item Its edges connect a vertex $(L+1)i$ with a vertex $(L+1)j$ for $||i-j||_1 = 1$.
\end{itemize}

From now on, sites on the original lattice will be denoted using the letters $x,y, \dots$, while vertices of $\zz_\l^d$ will be denoted with the letters $i,j,\dots$. Similarly to \cite{BT18}, the purpose of this coarse grained lattice is to define an auxiliary dynamics that has diffusive behavior on a larger scale.\\

Let us now recall a few definitions in relation with the coarse grained lattice, introduced in \cite{MST}.

 \begin{defn}
 Let $E$ be a subset of the standard basis with size $|E|\le d-1$, $V\subset \zz^d$ a set of sites, and fix a site $x\in V$. The $|E|$-dimensional \emph{slice} of $V$ passing through $x$ in the directions of $E$ is defined as $V\cap (x+\text{span}E)$, where $\text{span}E$ is the linear span of $E$.
 \end{defn}

\begin{defn}
Given the $d$-dimensional cube $\mathcal{C}_n=[n]^d$ and an integer $k\le d$ we define the $k^{\textup{th}}$ \emph{frame} of $\mathcal{C}_n$ as the union of all $(k-1)$-dimensional slices passing through $(1,\dots,1)$.\\
Next, we say that the box $\mathcal{C}_n$ is \emph{frameable} for the configuration $\eta \in \{0,1\}^{\mathcal{C}_n}$ if $\eta$ is connected by legal KA-$k$f transitions to a configuration for which the $k^{\textup{th}}$ frame of $\mathcal{C}_n$ is empty.
\end{defn}

%\begin{rem}
%Note that replacing the site $(1,\dots,1)$ by any corner of the cube does not change the definition of being %frameable (see []).
%\end{rem}

\begin{defn}
Given a configuration $\eta$, we say that a box $B$ is $(d,k)$-\emph{good} for $\eta$ if all $(d-1)$-dimensional slices of $B$ are $(d-1,k-1)$-frameable for all configurations $\eta'$ that differ from $\eta$ in at most one site. We also require a good box to contain one extra empty site in addition to the ones required before.
\end{defn}

Note that this definition slightly differs from \cite{MST} by requiring an additional empty site. The reason will be clarified in the proof of Lemma \ref{existenceTstepmove}. Whenever the context allows it, we shall simply say that a box is good instead of $(d,k)$-good.

\begin{ex}
A box is $(2,2)$-good if it contains at least two empty sites in each row each column, and at least one additional empty site in the box.
\end{ex}

\begin{prop}\label{claim:probofgood}
Let $d \ge k \ge 2$ and $\l$ be defined as in (\ref{def:l}). Then :
\begin{enumerate}[(i)]
    \item The probability that the box $[\l]^d$ is $(d,k)$-frameable is at least $q^{d\l^{k-1}}$. In particular, for an appropriate choice of constants in equation (\ref{def:L}), it is much larger than $1/L$.
    \item The probability that the box $[\l]^d$ is $(d,k)$-good tends to $1$ when $q \rightarrow 0$.
\end{enumerate}
\end{prop}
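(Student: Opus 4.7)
The plan is to prove the two parts separately: part (i) via a single combinatorial bound, and part (ii) by induction on $k$.

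For part (i), I would observe that if the $k$-th frame of $[\ell]^d$ is already empty under $\eta$, then $\eta$ is trivially $(d,k)$-frameable, since the empty sequence of KA transitions witnesses it. The $k$-th frame is the union of the $\binom{d}{k-1}$ slices of dimension $k-1$ passing through $(1,\ldots,1)$, so it has cardinality at most $C_{d,k}\,\ell^{k-1}$ for a combinatorial constant $C_{d,k}$. By the product structure of $\mu$,
\[
\mu\bigl([\ell]^d \text{ is }(d,k)\text{-frameable}\bigr) \;\ge\; \mu(\text{frame is empty}) \;\ge\; q^{d\ell^{k-1}},
\]
after absorbing $C_{d,k}$ into the constant appearing in (\ref{def:l}). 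The comparison with $1/L = q^{c\ell}$ then reduces to choosing the constant $c$ in (\ref{def:L}) sufficiently large in terms of the constants in (\ref{def:l}).

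For part (ii), the box is $(d,k)$-good iff it contains an extra empty site and every $(d-1)$-dimensional slice is $(d-1,k-1)$-frameable under every single-site perturbation of $\eta$. The extra empty site is handled immediately by $\mu(\text{no empty site in }[\ell]^d)=(1-q)^{\ell^d}\to 0$. For the frameability of slices I proceed by induction on $k$. In the base case $k=2$, $(d-1,1)$-frameable reduces to the slice containing at least one empty site, since the $k-1=1$ constraint is vacuous and KA-$1$ dynamics is simple exclusion, under which any configuration with an empty site can be rearranged to place the vacancy at $(1,\ldots,1)$. With $\ell=c\log(1/q)\,q^{-1/(d-1)}$ we have $q\ell^{d-1}\to\infty$, so a union bound over the $O(\ell)$ slices and the $O(\ell^d)$ single-site flips concludes.

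For the inductive step $k\ge 3$, the key point is that $\ell$ is one iterated exponential larger than the $(d-1,k-1)$-scale $\ell'$, hence (for a suitable choice of constants) sits above the critical scale associated with $(d-1,k-1)$-frameability in dimension $d-1$. Invoking the sharp-threshold bootstrap-percolation and KA-frameability estimates recalled in \cite{BFT,MST}, a $(d-1)$-dimensional box of side $\ell$ is then $(d-1,k-1)$-frameable with probability tending to $1$; a union bound over the $O(\ell^d)$ possible single-site perturbations completes the proof. The main obstacle is aligning the scales precisely: one must verify that the constants in (\ref{def:l}) and (\ref{def:L}) can be chosen compatibly with both the sharp-threshold frameability estimates (needed for part (ii)) and the lower bound comparison of part (i), and then handle robustness under the single-site flip required by the definition of a good box; the rest of the argument is a matter of routine union bounds.
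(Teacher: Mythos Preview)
Your proposal is correct and follows essentially the same approach as the paper: for (i) you bound below by the probability that the frame is already empty, exactly as the paper does, and for (ii) you ultimately appeal to the frameability estimates of \cite{BFT,MST}, which is precisely what the paper's proof consists of (it gives no further argument beyond those citations). Your treatment of (ii) is slightly more detailed in that you unpack the $k=2$ base case elementarily, but note that what you call an ``induction on $k$'' is not really inductive---your $k\ge 3$ step does not use the inductive hypothesis but simply invokes the cited references, so it is the same argument as the paper's, just with the $k=2$ case separated out.
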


\begin{proof}
The probability of being frameable is bounded by the probability that the frame is already empty. Since the size of the frame is less than $d\l^{k-1}$, the first bound follows.\\
The second bound is due to \cite{BFT}. See also Proposition 3.26 in \cite{MST} and the explanation that follows.
\end{proof}

The following definitions describes blocks that contain a droplet which is able to propagate. See Figure \ref{fig:blocks_and_boxes}.

\begin{defn}
Let $i,j$ be two adjacent sites in $\zz_\l^d$ with, suppose, $j=i+(L+1)e_\alpha$.\\
A geometric path connecting $i$ and $j$ is a sequence of adjacent boxes $B_1,\dots,B_n$ in the block $i + [L]^d$ with $B_1 = i + [\l]^d$ and $B_n = j - (\l+1)e_\alpha + [\l]^d$. We say that a geometric path is $(d,k)$-\emph{super-good} for a configuration $\eta$ if :
\begin{enumerate}
    \item For all $m$, the box $B_m$ is $(d,k)$-good.
    \item At least one of the boxes in the sequence is $(d,k)$-frameable
\end{enumerate}
\end{defn}

\begin{defn}\label{def:block-connected}
Let $i \in \zz_\l^d$ and $j=i+(L+1) e_{\alpha}$. We say that $i,j$
are $(d,k)$-\textit{block-connected} for $\eta$ if the following conditions
hold :
\begin{enumerate}
\item There exist a $(d,k)$-super-good path connecting $i$ and $j$ whose length is at most $3L$.
\item All $\left(d-1\right)$-dimensional external faces of the box $i+\left[\l\right]^{d}$
that are adjacent to $i$ are $\left(d-1,k-1\right)$-good.
\item All $\left(d-1\right)$-dimensional external faces of the box $j-\left(\l+1\right)e_{1}+\left[\l+1\right]^{d}$
that are adjacent to $j$ are $\left(d-1,k-1\right)$-good.
\end{enumerate}
\end{defn}

We stress that being block-connected does not depend on the values of $\eta\left(i\right)$
and $\eta\left(j\right)$.

This notion of being block-connected defines a percolation process on $\zz_\l^d$. Let $\overline{\eta}$ be the configuration on the edges of $\zz_\l^d$
that gives the value $1$ to an edge $i\sim j$ if $i$ and $j$ are
block-connected and $0$ otherwise. We denote by $\overline{\mu}$
the measure on these configurations induced by $\mu$. 

\begin{lem}
%$\overline{\mu}$ is a $2d^2$-dependent bond percolation such that,
%for our choice of $L,$ the probability for an edge to be open tends
%to $1$ as $q$ tends to $0$. In particular, it contains an infinite
%connected cluster.
$\overline{\mu}$ is a stationary ergodic measure, that stochastically dominates a supercritical Bernoulli bond percolation, whose parameter tends to $1$ as $q$ tends to $0$.
\end{lem}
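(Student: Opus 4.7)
The plan is to verify the three assertions separately: stationarity, ergodicity, and stochastic domination by a Bernoulli percolation whose parameter tends to $1$ as $q\to 0$. Stationarity is immediate: the event that two adjacent $i,j\in\zz_\l^d$ are block-connected depends on $\eta$ only through its restriction to the block $i+[L]^d$ and nearby external faces, and the definition is equivariant under the natural translation group $(L+1)\zz^d$ acting on $\zz_\l^d$; since $\mu$ is translation invariant, so is $\overline{\mu}$. Ergodicity follows because $\overline{\mu}$ is the pushforward of the product measure $\mu$ under a local coding that commutes with shifts, and $\mu$ is mixing (hence ergodic) with respect to any shift by a nonzero vector of $(L+1)\zz^d$.

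The main work is the stochastic domination. Since the block-connection status of each edge depends on $\eta$ only in a bounded (in $\zz_\l^d$) neighborhood, the field $\overline{\eta}$ is $r$-dependent for some fixed $r$. By the Liggett--Schonmann--Stacey theorem, it therefore stochastically dominates a Bernoulli bond percolation of parameter $p(q)$ with $p(q)\to 1$ provided we can show $\overline{\mu}(\overline{\eta}(ij)=1)\to 1$ as $q\to 0$, uniformly in the edge $(i,j)$. To establish this, union-bound over the three requirements in Definition \ref{def:block-connected}. The two external-face conditions are $(d-1,k-1)$-goodness events, whose probabilities tend to $1$ by Proposition \ref{claim:probofgood}(ii) applied one dimension lower. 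For the super-good path, take the straight line of $L/\l$ disjoint boxes of side $\l$ joining $B_1=i+[\l]^d$ to $B_n=j-(\l+1)e_\alpha+[\l]^d$ inside $i+[L]^d$; this path has length well below $3L$. By a union bound over the $L/\l$ boxes, all of them are $(d,k)$-good with probability tending to $1$, thanks again to Proposition \ref{claim:probofgood}(ii). Since these boxes are disjoint, the events that they are $(d,k)$-frameable are independent under $\mu$, so at least one is frameable with probability at least $1-(1-q^{d\l^{k-1}})^{L/\l}$, which tends to $1$ because the constants in (\ref{def:l})--(\ref{def:L}) are calibrated so that $L\, q^{d\l^{k-1}}\to\infty$, exactly the content of the last sentence of Proposition \ref{claim:probofgood}(i).

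The main obstacle is the calibration of the two length scales: on the one hand $L$ must be large enough that $Lq^{d\l^{k-1}}\gg 1$ in order for a rare frameable box to appear with high probability along the path; on the other, $L$ must not be so large that the $L/\l$ boxes that need to be simultaneously good contribute a non-vanishing error in the union bound. Both are compatible with the definitions \eqref{def:l}--\eqref{def:L} after choosing the generic constants appropriately, and once the single-edge estimate is secured the Liggett--Schonmann--Stacey input closes the argument.
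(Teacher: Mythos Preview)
Your reductions via translation invariance, ergodicity of factors of product measures, and Liggett--Schonmann--Stacey are fine, and so is your treatment of the external-face conditions and of the appearance of a frameable box. The gap is in the step where you take the \emph{straight-line} path of $L/\l$ boxes and claim, by a union bound together with Proposition~\ref{claim:probofgood}(ii), that all of them are $(d,k)$-good with probability tending to $1$. Proposition~\ref{claim:probofgood}(ii) only gives $P(\text{not good})\to 0$ with no rate, and the rate one actually gets is far too slow. For instance when $k=d=2$, a box fails to be good as soon as some row has at most one vacancy; with $\l=c_1\log(1/q)/q$ this has probability of order $\l q(1-q)^{\l}\asymp \log(1/q)\,q^{c_1}$, so $P(\text{not good})$ is only polynomially small in $q$. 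Meanwhile $L=q^{-c_2\l}$ forces $L/\l=\exp\!\big(\Theta((\log(1/q))^2/q)\big)$, and no choice of the constants $c_1,c_2$ can make $(L/\l)\cdot q^{c_1-1}\to 0$. The two calibration requirements you list in your final paragraph are therefore \emph{not} simultaneously satisfiable: the scale $L$ needed for a frameable box to appear is so large that the straight line will, with probability tending to $1$, contain bad boxes.

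This is precisely why the paper does not use a fixed path. Instead it works in a two-dimensional slab of boxes and invokes large deviations for supercritical oriented percolation \cite{DS88}: once $P(\text{good})$ exceeds the oriented-percolation threshold (which it does since it tends to $1$), an up-right good path from $i+[\l]^d$ to the far side and an up-left good path from $j-(\l+1)e_\alpha+[\l]^d$ to the near side both exist with high probability, and their concatenation gives a good path of length at most $3L$ that \emph{routes around} the inevitable bad boxes. The frameable box along this path is then handled via FKG (both ``good'' and ``frameable'' are increasing in the vacancies). To repair your argument you must replace the straight-line union bound by such a percolation construction.
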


\begin{proof}
%The probability that an edge is open only depends on a finite number (depending only of $d$) of sets. Hence, we only need to prove that any of these sets matches its respective condition (1),(2) or (3) with probability tending to $1$ as $q$ tends to $0$\\
The probability that an edge is open depends only on the sites in the blocks adjacent to it. There are $2d$ such blocks, and the diameter of a block is $d$, hence the percolation process is $2d^2$-dependent.
By \cite{LSS}, it suffices to prove that the probability to be block connected tends to $1$ as $q$ tends to $0$.\\
The probability for a $(d-1)-$dimensional face of a box to be good tends to $1$ as $q$ goes to $0$ (see Proposition \ref{claim:probofgood}).
We now need to prove that the probability that a block satisfies the condition (1) is also large.

It will be convenient to restrict the super-good path that we seek to a two dimensional plane, as in \cite{MST}. We assume without loss of generality that $j=i+(L+1)e_1$.

By Proposition \ref{claim:probofgood} and large deviations for oriented percolation \cite{DS88} the following paths exist with high probability: 
\begin{enumerate}
    \item an up-right path of good boxes connecting $i + [\l^d]$ with $j+[\l]\times[L]\times[\l]^{d-2}$,
    \item an up-left path of good boxes connecting $j+[\l]^d - (\l+1) e_1$ to $i+[\l]\times[L]\times[\l]^{d-2}$.
\end{enumerate}
This provides a good path of boxes of length at most $3L$.

It is thus left to show that one of these boxes is super-good. This is a consequence of Proposition \ref{claim:probofgood} and the FKG inequality (since both being good and being frameable are increasing events).
\end{proof}

Following \cite{BT18}, we compare the KA dynamics to a simple random walk
on the infinite connected cluster of $\zz_\l^d$, conditioned
on the event that $0$ is in this cluster. We will denote $\overline{\mu}^{*}(\cdot)=\overline{\mu}\left(\cdot|0\leftrightarrow\infty\right)$.
It is shown in \cite{F08, DFGW} that this dynamics has a diffusive limit, given by a strictly
positive diffusion matrix.

\begin{prop}\label{Daux}
Let $D_{\text{aux}}$ be the matrix characterized by 
\[
u\cdot D_{\text{aux}}u=\inf_{f}\left\{ \sum_{\stackrel{i\in \zz_\l^d}{i\sim0}}\overline{\mu}^{*}\left(\overline{\eta}_{0i}\left[u\cdot i+f\left(\tau_{i}\overline{\eta}\right)-f(\overline{\eta})\right]^{2}\right)\right\} 
\]
for any $u\in\rr^{d}$, where the infimum is taken over local functions
$f$ on $\left\{ 0,1\right\} ^{\mathcal{E}\left(\zz_\l^d\right)}$.
Then $D_{\text{aux}}$ is bounded away from $0$ uniformly in $q$.
\end{prop}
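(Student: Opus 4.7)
The plan is to reduce the statement to the classical case of simple random walk on the infinite cluster of supercritical Bernoulli bond percolation, for which positivity of the diffusion coefficient is well known, and to import this lower bound uniformly in $q$ via the stochastic domination established in the previous lemma.

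First I would fix a reference Bernoulli parameter. The previous lemma guarantees that $\overline{\mu}$ dominates a supercritical Bernoulli bond percolation on $\zz_\l^d$ whose parameter $p(q)$ tends to $1$ as $q \to 0$. Choose once and for all some $p_0 \in (p_c, 1)$, where $p_c$ is the critical value for bond percolation on $\zz^d$. Then there exists $q_0 > 0$ such that $\overline{\mu}$ stochastically dominates Bernoulli$(p_0)$ for every $q \in (0, q_0]$, uniformly. Standard coupling provides a joint law of $(\xi, \overline{\eta})$ under which every $\xi$-open edge is $\overline{\eta}$-open almost surely. Since $\{0 \leftrightarrow \infty\}$ is an increasing event, the FKG inequality ensures that the same stochastic ordering persists under the conditional measures $\overline{\mu}^*$ and the analogous conditioned Bernoulli measure.

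Next, I would invoke the classical result of \cite{DFGW}, already used in \cite{BT18}, which asserts that simple random walk on the infinite cluster of supercritical Bernoulli bond percolation on $\zz^d$ admits an invariance principle with a strictly positive diffusion matrix. Applied at parameter $p_0$, this produces a constant $D_0 = D_0(p_0, d) > 0$, independent of $q$. The final step is to deduce $D_{\textup{aux}} \ge D_0$ from the comparison. The cleanest way is to interpret the variational formula defining $D_{\textup{aux}}$ as an effective conductance (averaged over the environment seen from the walker) in the random network where each block-connected edge has unit conductance. Adding conductances can only decrease effective resistance by Rayleigh's monotonicity principle, so passing from the Bernoulli$(p_0)$ network to the dominating $\overline{\mu}$ network can only increase the diffusion. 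Combined with the FKG-preserved ordering after conditioning on $\{0 \leftrightarrow \infty\}$, this yields $D_{\textup{aux}} \ge D_0$ uniformly in $q \le q_0$.

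The main obstacle is making this comparison step rigorous, since $D_{\textup{aux}}$ is not a literal effective conductance but an annealed quantity involving the environment seen from the particle. A careful implementation requires choosing trial functions in the variational formula for $D_{\textup{aux}}$ that are transported from (near-)optimizers for the Bernoulli problem via the monotone coupling, so that the resulting Dirichlet form on $\overline{\mu}^*$ is bounded by the one on Bernoulli$(p_0)$. A concrete alternative, which I would fall back on if the coupling argument becomes technical, is to follow the explicit test-function construction in \cite{BT18} line by line: it already produces a positive lower bound for every fixed $q$, and inspection of the proof shows that the only parameter that enters is the density of open edges in the coarse-grained percolation, which by the previous lemma can be taken $\ge p_0$ uniformly in $q \le q_0$.
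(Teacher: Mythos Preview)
The paper does not give an explicit proof of this proposition: it is stated immediately after the sentence ``It is shown in \cite{F08, DFGW} that this dynamics has a diffusive limit, given by a strictly positive diffusion matrix,'' and the uniformity in $q$ is left implicit, relying on the previous lemma (the domination parameter tends to $1$ as $q\to 0$) together with the cited references. Your sketch is therefore already more detailed than what the paper provides, and your overall strategy---fix a supercritical Bernoulli parameter $p_0$ dominated by $\overline{\mu}$ for all small $q$, then import the classical lower bound---is the right one and matches the spirit of the citation.

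Your hesitation about the Rayleigh-type step is justified: monotonicity of the annealed diffusion coefficient under stochastic domination of the environment is not a one-line consequence of the network version, because the variational formula involves the measure $\overline{\mu}^*$ on environments, not a single graph. The fallback you describe is indeed how this is handled in practice: the results in \cite{F08, DFGW} (and the argument of \cite{BT18} that the paper is following) give a lower bound on the diffusion coefficient that depends on the environment law only through quantities controlled by the dominating Bernoulli parameter---the density of the infinite cluster and chemical-distance estimates---so once $p(q)\ge p_0$ for all $q\le q_0$, the bound is uniform. Citing those references, as the paper does, is the intended argument; your attempt to make the monotonicity explicit via coupling and transported test functions would work but is more than the paper asks for.
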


Thus, for the rest of this section we will concentrate on proving
an inequality of the form:
\begin{equation} \label{comparaison}
D(q) \ge \Delta(q)^{-1} \,e_1\cdot D_{\text{aux}}e_1,
\end{equation}
for $\Delta(q)$ proportional to the bound to the diffusion coefficient in Theorem \ref{mainthm}.\\

First, in order to compare the KA dynamics and the auxiliary one,
we should put them on the same space.
\begin{lem}
Fix $u\in \rr^d$. Then
\[
u\cdot D_{\text{aux}}u\le\overline{\mu}\left(0\leftrightarrow\infty\right)^{-1}\,\inf_{f}\left\{ \sum_{\stackrel{i\in\zz_\l^d}{i\sim0}}\mu_{0}\left(\overline{\eta}_{0i}\left[u\cdot i+f\left(\tau_{i}\eta^{0,i}\right)-f\left(\eta\right)\right]^{2}\right)\right\}, 
\]
where now the infimum is taken over local functions
on $\Omega$.
\end{lem}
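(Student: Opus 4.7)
The plan is to construct, for any local function $f$ on $\Omega$ appearing in the right-hand side, a competitor $g$ for the variational formula defining $D_{\mathrm{aux}}$, and then use conditional Jensen's inequality to compare the two Dirichlet forms. Concretely, given such an $f$, I would set
\[
g(\overline{\eta})\;:=\;\mu_0\!\left[f(\eta)\,\middle|\,\overline{\eta}\right].
\]
Locality of $g$ on $\{0,1\}^{\mathcal{E}(\zz_\l^d)}$ follows from locality of $f$ and from the finite-range nature of block-connectedness, so $g$ is an admissible test function in the infimum defining $u\cdot D_{\mathrm{aux}} u$.

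The key step is to establish the identity
\[
g(\tau_i \overline{\eta}) - g(\overline{\eta})\;=\;\mu_0\!\left[f(\tau_i \eta^{0,i})-f(\eta)\,\middle|\,\overline{\eta}\right].
\]
This rests on the observation that the map $\sigma_i\colon \eta\mapsto \tau_i\eta^{0,i}$ preserves the product Bernoulli measure $\mu$ (it is a translation composed with a transposition of two coordinates) and induces precisely the shift $\tau_i$ on the coarse-grained configuration, since block-connectedness does not depend on the values of $\eta$ at the vertices of $\zz_\l^d$ (which lie in the separators between adjacent blocks). Combined with the $\overline{\eta}$-independence of $\eta(0)$ under $\mu$, this identity transfers to the conditional measure $\mu_0$. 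Once it is in hand, conditional Jensen's inequality applied to $x\mapsto x^2$ with the constant $u\cdot i$ added inside yields
\[
\left[u\cdot i + g(\tau_i \overline{\eta})-g(\overline{\eta})\right]^2\;\le\;\mu_0\!\left[(u\cdot i + f(\tau_i \eta^{0,i})-f(\eta))^2\,\middle|\,\overline{\eta}\right].
\]
Multiplying by the $\overline{\eta}$-measurable indicator $\overline{\eta}_{0i}$ and integrating against $\mu_0$ (which agrees with $\overline{\mu}$ for $\overline{\eta}$-measurable integrands) produces
\[
\overline{\mu}\!\left(\overline{\eta}_{0i}[u\cdot i + g(\tau_i \overline{\eta})-g(\overline{\eta})]^2\right)\;\le\;\mu_0\!\left(\overline{\eta}_{0i}[u\cdot i + f(\tau_i \eta^{0,i})-f(\eta)]^2\right).
\]

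To finish, I would pass from $\overline{\mu}$ to $\overline{\mu}^*$ via the trivial bound $\mathbf{1}_{0\leftrightarrow\infty}\le 1$, which gives $\overline{\mu}^*(Y)\le \overline{\mu}(0\leftrightarrow\infty)^{-1}\overline{\mu}(Y)$ for non-negative $Y$. Summing over $i\sim 0$ in $\zz_\l^d$ and invoking the variational formula for $D_{\mathrm{aux}}$ with our specific choice of $g$ yields
\[
u\cdot D_{\mathrm{aux}} u\;\le\;\overline{\mu}(0\leftrightarrow\infty)^{-1}\sum_{\substack{i\in\zz_\l^d\\ i\sim 0}}\mu_0\!\left(\overline{\eta}_{0i}[u\cdot i + f(\tau_i \eta^{0,i})-f(\eta)]^2\right),
\]
after which taking the infimum over $f$ closes the argument. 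The main obstacle is the key identity: one must check that the measure-preservation of $\sigma_i$ under $\mu$ is properly compatible with the additional conditioning on $\{\eta(0)=1\}$ defining $\mu_0$, a verification that hinges on the (easy but essential) fact that $\overline{\eta}$ is measurable with respect to $\eta$ restricted to the complement of the vertex set of $\zz_\l^d$, so that conditioning on the occupation of the corner $0$ does not interact with the coarse-grained structure.
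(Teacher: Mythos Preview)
Your approach---take conditional expectation to produce a coarse-grained test function, apply Jensen, then remove the conditioning on $\{0\leftrightarrow\infty\}$ via the trivial bound $\One_{0\leftrightarrow\infty}\le 1$---is correct and is exactly the argument of \cite[Lemma~5.2]{BT18}, to which the paper defers for its proof. The one point you flag as the ``main obstacle'' is indeed the crux: it is cleanest to observe that $\sigma_i$ preserves not only $\mu$ but $\mu_0$ itself (since $\sigma_i(\eta)(0)=\eta(0)$ under the intended re-centering convention), which together with $\overline{\sigma_i\eta}=\tau_i\overline{\eta}$ yields the key identity directly.
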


\begin{proof}
The proof follows the exact same steps as that of \cite[Lemma 5.2]{BT18}.
\end{proof}
The comparison of both dynamics will be via a path argument. We will
use the construction of \cite{MST}, by concatenating basic moves. The next definition describes a sequence of legal KA transitions which keeps track of the configuration $\eta$ and the position of a marked particle $z$.

\begin{defn}
Given $\mathcal{M} \subset \Omega$, a $T$-\emph{step move} $M$ with domain $\textup{Dom}(M) = \{(\eta,z) | \, \eta \in \mathcal{M} \, \textup{and} \, \eta(z) = 1 \}$ is a function from $\textup{Dom}(M)$ to $(\Omega \times V)^{T+1}$ such that for any $(\eta,z) \in \textup{Dom}(M)$  the sequence $(M_t\eta, z_t) := M(\eta, z)_t$ satisfies:
\begin{enumerate}
\item $M_0\eta = \eta$ and $z_0 = z$,
\item for any $t \in [T]$, the configurations $M_{t-1}\eta$ and $M_t \eta$ are either identical or linked by a legal KA transition contained in $V$,
\item for any $t \in [T]$, $z_t$ is the new position of the particle that was at site $z_{t-1}$ in $M_t \eta$. 
\end{enumerate}
For $t\in[T]$ and $\eta$, whenever $M_{t-1}\eta \ne M_t \eta$, a particle has jumped from a site to a neighbor. We denote by $x_t(\eta, z)$ (resp. $y_t(\eta,z)$) the initial (resp. final) position of the particle during the jump. More precisely, $x_t$ and $y_t$ are such that $M_{t-1}\eta(x_t) = M_t \eta (y_t) = 1$ and $M_{t-1}\eta(y_t) = M_t \eta (x_t) = 0$.\\
If $M_{t-1}\eta = M_t \eta $, we set $x_t=y_t=0$.
We denote $\text{Dom}_0(M)= \{ \eta \in \mathcal{M} \ | \ \eta(0) = 1\}$.
\end{defn}

\begin{defn}
Given a $T$-step move $M$, its \emph{information loss} $\textup{Loss}(M)$ is defined as
$$2^{\textup{Loss}(M)} = \underset{\stackrel{\eta' \in \text{Dom}(M)}{t\in [T]}}{\sup}  \# \{\eta \in \textup{Dom}(M)|M_{t-1}\eta = M_{t-1} \eta', M_t\eta = M_t\eta' \} $$
\end{defn}

The next lemma constructs a $T$-step move that exchanges a marked particle at the origin with a block-connected site.

\begin{lem}\label{existenceTstepmove}
There exists a $T$-step move $M$ satisfying the following conditions:
\begin{enumerate}
\item $\text{Dom}_0(M)=\left\{ \overline{\eta}_{0,(L+1)e_1}=1 \right\}$,
\item for all $\eta \in \text{Dom}_0(M), M(\eta,0)_T = (\eta^{0,(L+1)e_1}, (L+1)e_1)$,
\item $T\le CL\l^{\lambda}$ for $k=2$, and $T\le CL2^{\l^{d}}$ for $k\ge3$,
\item $\text{Loss}(M)\le C\log_{2}(\l)\,\l$ for $k = 2$, and $\text{Loss}(M)\le C\l^{d-1}$ for $k \ge 3$.
\end{enumerate}
\end{lem}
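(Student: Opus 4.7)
The plan is to realize the move in three phases, closely following the canonical path construction of \cite{MST}. First, inside the frameable box of the super-good path guaranteed by condition (1) of block-connectedness, we run a canonical sequence of legal KA-$k$f transitions that empties its $k$-th frame, creating a ``droplet'' of vacancies. Second, we propagate this droplet along the super-good path from box to box until it reaches the box containing the origin. The key observation is that each box along the path is $(d,k)$-good, so the presence of a droplet in an adjacent box effectively reduces the KA-$k$f constraint inside it to a KA-$(k-1)$f constraint on its $(d-1)$-dimensional slices, one of the $k$ required empty neighbors being supplied by the adjacent emptied frame. Third, once the droplet is in place, we combine it with the $(d-1,k-1)$-good external faces of the two endpoint boxes (conditions (2) and (3)) to carve a fully empty corridor from $0$ to $(L+1)e_1$, slide the tagged particle across, and then reverse phases two and one to restore the configuration outside the particle's trajectory. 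The extra empty site required in our strengthened definition of $(d,k)$-good guarantees that the droplet survives the jump across the interface between the two endpoint boxes.

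The time bound follows by induction on $k$. The cost of traversing a single box is the cost of moving a droplet in a $(d-1,k-1)$-Kob-Andersen problem on a box of side $\ell$, which unwinds to a polynomial $\ell^\lambda$ for $k=2$ (a single reduction to a one-dimensional problem) and to an iterated exponential $2^{\ell^d}$ for $k\ge 3$ (nested reductions through $k-2$ levels). Multiplying by at most $3L/\ell$ boxes in the super-good path and adding the polynomial cost of phases one and three yields the bounds $CL\ell^\lambda$ and $CL\,2^{\ell^d}$ stated in the lemma.

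For the information loss, which by definition bounds the worst per-step number of pre-images of a given observed transition, the point is that at any step the only ambiguity in $\eta$ given $M_{t-1}\eta$ and $M_t\eta$ comes from the current droplet configuration inside the box currently being processed. For $k\ge 3$, the relevant degrees of freedom are confined to a single $(d-1)$-dimensional slice, yielding an information loss of order $\ell^{d-1}$. For $k=2$ the droplet is one-dimensional of size $\ell$, and the routing of the droplet along the propagation direction adds an $O(\log_2 \ell)$ multiplicative factor of binary choices per droplet site, yielding $O(\ell \log_2 \ell)$.

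The main obstacle will be orchestrating the droplet propagation so as to preserve reversibility while meeting the claimed information-loss bound. This demands canonical choices at every branching point of the algorithm---which vacancy to move, in which order, along which slice---in a way compatible with the inductive dimensional reduction and with the sequence of $(d,k)$-good boxes encountered along the path. The canonical path framework of \cite{MST} essentially provides the needed procedures at each level of the induction; the one genuine addition here is the control of the exchange across the interface between the two endpoint blocks, which is what motivated inserting the extra empty site into our definition of $(d,k)$-good.
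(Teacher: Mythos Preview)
Your three-phase outline captures the broad architecture of the construction in \cite{MST}, but it misses the specific technical point that makes this lemma more than a citation. The move built in \cite{MST} (cf.\ \cite[Proposition 5.2.41]{SphD}) already propagates a droplet along a super-good path and swaps the \emph{values} of $\eta(0)$ and $\eta((L+1)e_1)$. What it does \emph{not} do is guarantee that the marked particle initially at $0$ is the one that ends up at $(L+1)e_1$: the permutation move in that construction freely exchanges two occupied sites, which is not a single legal KA transition and does not track which particle is which. The genuine content of the present lemma is to modify the permutation move so that it respects particle identity, and your proposal nowhere addresses this. Your description of phase three---``carve a fully empty corridor from $0$ to $(L+1)e_1$ and slide the tagged particle across''---is not what happens and is not feasible within the stated bounds: the particle is transported step by step together with the moving droplet along the super-good path, using at each box a framing move followed by an adapted permutation move, then a jump across the interface to the next box.

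Relatedly, your explanation of the extra empty site in the definition of $(d,k)$-good is off. It is not there to let the droplet ``survive the jump across the interface''; it is there precisely to make the adapted permutation move work. When the permutation move has to exchange two sites that are both occupied (one of them being the marked particle), a direct swap is illegal, and an auxiliary vacancy inside the framed box is needed as a buffer (see the caption of the paper's Figure~\ref{fig:permutation}). Without identifying this, your argument does not establish condition (2) of the lemma, i.e.\ that the final position of the tracked particle is indeed $(L+1)e_1$.
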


\begin{proof}
%The construction of this move repeats the argument of {[}{]}, proposition 5.2.41. An important difference is that in our case exchanging two occupied sites is not allowed. This problem could be solved as explained in figure todo add figure.\\

The proof is based on the construction of \cite{MST}, explained in \cite[Proposition 5.2.41]{SphD}. In that proposition, the $T$-step move is propagating a site through a super-good path. Note that propagating simply means that the values of the configuration in the initial and final sites are being swapped. This does not mean that the marked particle initially at $0$ reached the site $(L+1)e_1$, since in \cite{SphD}, the \emph{permutation move} exchanges any two sites even if they are both occupied (see \cite[Proposition 5.2.35]{SphD}), which is not a legal KA transition.

We will show here briefly the idea of the proof when $k=d=2$, repeating the construction of \cite{MST} with the appropriate adaptations. We will then explain how these adaptations apply to general $k,d$.

In order to construct the move $M$, we will construct a sequence of shorter, simpler, moves.
The first of them is the \emph{column exchange move}:
\begin{claim} \label{claim:col_exchanged}
Fix $y\in \mathbb{Z}^2$, and consider the configurations in which the column $y+\{0\}\times [\l]$ is empty, and the column to its right contains at least one empty site. Then there exists a $T$-step move $M$ whose domain consist of these configurations, and in the final state $M_T\eta$ the columns $y+\{0\}\times [\l]$ and $y+\{1\}\times [\l]$ are exchanged. Moreover, $\textup{Loss}(M) = O(\log_2\l)$ and $T = O(\l)$. See Figure \ref{fig:exchange_columns}.
\end{claim}

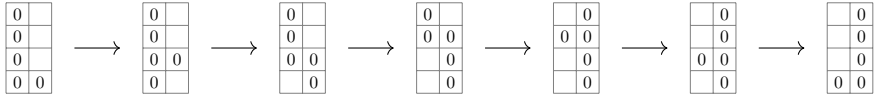
\begin{figure}
\begin{tikzpicture}[scale=0.3, every node/.style={scale=0.6}]
    \def \x{0}
    \def \y{0}	
	\draw[step=1, gray, very thin, shift={(\x,\y)}] (0,0) grid +(2,4);

	\foreach \yy in {0,1,2,3}{
		\draw[shift={(\x,\y)}] (0.5,\yy+0.5) node[black] {$0$};
	}
	\draw[shift={(\x,\y)}]  (1.5,0.5) node[black] {$0$};
		
	\draw[->,shift={(\x,\y)}]  (3,2) to (5,2);
		
    \def \x{6}
    \def \y{0}	
	\draw[step=1, gray, very thin, shift={(\x,\y)}] (0,0) grid +(2,4);

	\foreach \yy in {0,1,2,3}{
		\draw[shift={(\x,\y)}] (0.5,\yy+0.5) node[black] {$0$};
	}
	
	\draw[shift={(\x,\y)}]  (1.5,1.5) node[black] {$0$};
		
	\draw[->,shift={(\x,\y)}]  (3,2) to (5,2);
	
	\def \x{12}
    \def \y{0}	
	\draw[step=1, gray, very thin, shift={(\x,\y)}] (0,0) grid +(2,4);

	\foreach \yy in {1,2,3}{
		\draw[shift={(\x,\y)}] (0.5,\yy+0.5) node[black] {$0$};
	}
	
	\draw[shift={(\x,\y)}]  (1.5,0.5) node[black] {$0$};	
	\draw[shift={(\x,\y)}]  (1.5,1.5) node[black] {$0$};
		
	\draw[->,shift={(\x,\y)}]  (3,2) to (5,2);
	
	\def \x{18}
    \def \y{0}	
	\draw[step=1, gray, very thin, shift={(\x,\y)}] (0,0) grid +(2,4);

	\foreach \yy in {0,1,2}{
		\draw[shift={(\x,\y)}] (1.5,\yy+0.5) node[black] {$0$};
	}
	
	\draw[shift={(\x,\y)}]  (0.5,2.5) node[black] {$0$};	
	\draw[shift={(\x,\y)}]  (0.5,3.5) node[black] {$0$};
		
	\draw[->,shift={(\x,\y)}]  (3,2) to (5,2);
	
	\def \x{24}
    \def \y{0}	
	\draw[step=1, gray, very thin, shift={(\x,\y)}] (0,0) grid +(2,4);

	\foreach \yy in {0,1,2,3}{
		\draw[shift={(\x,\y)}] (1.5,\yy+0.5) node[black] {$0$};
	}
	
	\draw[shift={(\x,\y)}]  (0.5,2.5) node[black] {$0$};	
		
	\draw[->,shift={(\x,\y)}]  (3,2) to (5,2);
	
	\def \x{30}
    \def \y{0}	
	\draw[step=1, gray, very thin, shift={(\x,\y)}] (0,0) grid +(2,4);

	\foreach \yy in {0,1,2,3}{
		\draw[shift={(\x,\y)}] (1.5,\yy+0.5) node[black] {$0$};
	}
	
	\draw[shift={(\x,\y)}]  (0.5,1.5) node[black] {$0$};	
		
	\draw[->,shift={(\x,\y)}]  (3,2) to (5,2);
	
	\def \x{36}
    \def \y{0}	
	\draw[step=1, gray, very thin, shift={(\x,\y)}] (0,0) grid +(2,4);

	\foreach \yy in {0,1,2,3}{
		\draw[shift={(\x,\y)}] (1.5,\yy+0.5) node[black] {$0$};
	}
	
	\draw[shift={(\x,\y)}]  (0.5,0.5) node[black] {$0$};	
		
\end{tikzpicture}
\caption{\label{fig:exchange_columns}Exchange an empty column with a good column.}
\end{figure}

The next move we will use is the \emph{framing move}:
\begin{claim} \label{claim:framing_move}
Fix a box, and consider the configurations for which the box is good, and, in addition, its bottom row is empty. Then there exists a T-step move $M$ whose domain consists of these configurations, and in the final state $M_T\eta$ the left column is also empty. Moreover, $\textup{Loss}(M) = O(\l\log_2\l)$ and $T = O(\l^2)$. See figure \ref{fig:framing_move}.\\
Note that the marked particle could move when we frame the box. See Figure \ref{fig:framing_move}.
\end{claim}

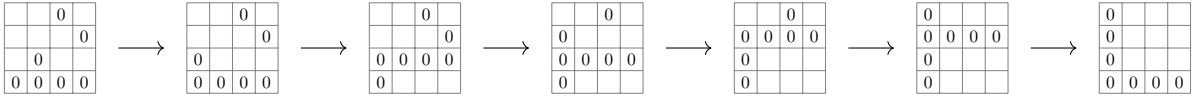
\begin{figure}
\begin{tikzpicture}[scale=0.3, every node/.style={scale=0.6}]
    \def \x{0}
    \def \y{0}	
	\draw[step=1, gray, very thin, shift={(\x,\y)}] (0,0) grid +(4,4);

	\foreach \xx in {0,1,2,3}{
		\draw[shift={(\x,\y)}] (0.5+\xx,0.5) node[black] {$0$};
	}
	\draw[shift={(\x,\y)}]  (1.5,1.5) node[black] {$0$};
	\draw[shift={(\x,\y)}]  (3.5,2.5) node[black] {$0$};
	\draw[shift={(\x,\y)}]  (2.5,3.5) node[black] {$0$};
		
	\draw[->,shift={(\x,\y)}]  (5,2) to +(2,0);
		
    \def \x{8}
    \def \y{0}	
	\draw[step=1, gray, very thin, shift={(\x,\y)}] (0,0) grid +(4,4);

	\foreach \xx in {0,1,2,3}{
		\draw[shift={(\x,\y)}] (0.5+\xx,0.5) node[black] {$0$};
	}
	\draw[shift={(\x,\y)}]  (0.5,1.5) node[black] {$0$};
	\draw[shift={(\x,\y)}]  (3.5,2.5) node[black] {$0$};
	\draw[shift={(\x,\y)}]  (2.5,3.5) node[black] {$0$};
		
	\draw[->,shift={(\x,\y)}]  (5,2) to +(2,0);
	
	\def \x{16}
    \def \y{0}	
	\draw[step=1, gray, very thin, shift={(\x,\y)}] (0,0) grid +(4,4);

	\foreach \xx in {0,1,2,3}{
		\draw[shift={(\x,\y)}] (0.5+\xx,1.5) node[black] {$0$};
	}
	\draw[shift={(\x,\y)}]  (0.5,0.5) node[black] {$0$};
	\draw[shift={(\x,\y)}]  (3.5,2.5) node[black] {$0$};
	\draw[shift={(\x,\y)}]  (2.5,3.5) node[black] {$0$};
		
	\draw[->,shift={(\x,\y)}]  (5,2) to +(2,0);
	
	\def \x{24}
    \def \y{0}	
	\draw[step=1, gray, very thin, shift={(\x,\y)}] (0,0) grid +(4,4);

	\foreach \xx in {0,1,2,3}{
		\draw[shift={(\x,\y)}] (0.5+\xx,1.5) node[black] {$0$};
	}
	\draw[shift={(\x,\y)}]  (0.5,0.5) node[black] {$0$};
	\draw[shift={(\x,\y)}]  (0.5,2.5) node[black] {$0$};
	\draw[shift={(\x,\y)}]  (2.5,3.5) node[black] {$0$};
		
	\draw[->,shift={(\x,\y)}]  (5,2) to +(2,0);
	
	\def \x{32}
    \def \y{0}	
	\draw[step=1, gray, very thin, shift={(\x,\y)}] (0,0) grid +(4,4);

	\foreach \xx in {0,1,2,3}{
		\draw[shift={(\x,\y)}] (0.5+\xx,2.5) node[black] {$0$};
	}
	\draw[shift={(\x,\y)}]  (0.5,0.5) node[black] {$0$};
	\draw[shift={(\x,\y)}]  (0.5,1.5) node[black] {$0$};
	\draw[shift={(\x,\y)}]  (2.5,3.5) node[black] {$0$};
		
	\draw[->,shift={(\x,\y)}]  (5,2) to +(2,0);
	
	\def \x{40}
    \def \y{0}	
	\draw[step=1, gray, very thin, shift={(\x,\y)}] (0,0) grid +(4,4);

	\foreach \xx in {0,1,2,3}{
		\draw[shift={(\x,\y)}] (0.5+\xx,2.5) node[black] {$0$};
	}
	\draw[shift={(\x,\y)}]  (0.5,0.5) node[black] {$0$};
	\draw[shift={(\x,\y)}]  (0.5,1.5) node[black] {$0$};
	\draw[shift={(\x,\y)}]  (0.5,3.5) node[black] {$0$};
		
	\draw[->,shift={(\x,\y)}]  (5,2) to +(2,0);
	
	\def \x{48}
    \def \y{0}	
	\draw[step=1, gray, very thin, shift={(\x,\y)}] (0,0) grid +(4,4);

	\foreach \xx in {0,1,2,3}{
		\draw[shift={(\x,\y)}] (0.5+\xx,0.5) node[black] {$0$};
	}
	\draw[shift={(\x,\y)}]  (0.5,2.5) node[black] {$0$};
	\draw[shift={(\x,\y)}]  (0.5,1.5) node[black] {$0$};
	\draw[shift={(\x,\y)}]  (0.5,3.5) node[black] {$0$};
		
\end{tikzpicture}
\caption{\label{fig:framing_move}Framing move.}
\end{figure}

When a box is framed, we are able to permute its sites:
\begin{claim} \label{claim:permutation_move}
Consider the box $[\l]^d$, and fix any permutation $\sigma$ of the sites $[2,\l]^d$, which by convention fixes the sites outside $[2,\l]^d$. Consider the configurations for which $[\l]^d$ is framed and $[2,\l]^d$ contain at least one empty site. Then there exists a $T$-step move $M$ whose domain consists of these configurations, and in the final state $M_T(\eta)$ the sites are permuted according to $\sigma$, i.e., $(M_T \eta)(\sigma x) = \eta(x)$ for all $x\in[2,\l]^d$. Moreover the marked particle moves from its original position $X$ to $\sigma X$. See figure \ref{fig:permutation}.
\end{claim}

\begin{figure}
\begin{tikzpicture}[scale=0.3, every node/.style={scale=0.6}]
	
	\def \x{0}
    \def \y{0}	
	\draw[step=1, gray, very thin, shift={(\x,\y)}] (0,0) grid +(4,4);

	\foreach \xx in {0,1,2,3}{
		\draw[shift={(\x,\y)}] (0.5+\xx,0.5) node[black] {$0$};
	}
	\draw[shift={(\x,\y)}]  (0.5,2.5) node[black] {$0$};
	\draw[shift={(\x,\y)}]  (0.5,1.5) node[black] {$0$};
	\draw[shift={(\x,\y)}]  (0.5,3.5) node[black] {$0$};
	
	\draw[shift={(\x,\y)}]  (3.5,2.5) node[black] {$b$};
	\draw[shift={(\x,\y)}]  (1.5,2.5) node[RoyalBlue] {$0$};
	\draw[shift={(\x,\y)}]  (2.5,2.5) node[black] {$a$};
		
	\draw[->,shift={(\x,\y)}]  (5,2) to +(2,0);
	
	\def \x{8}
    \def \y{0}	
	\draw[step=1, gray, very thin, shift={(\x,\y)}] (0,0) grid +(4,4);

	\foreach \xx in {0,1,2,3}{
		\draw[shift={(\x,\y)}] (0.5+\xx,1.5) node[black] {$0$};
	}
	\draw[shift={(\x,\y)}]  (0.5,2.5) node[black] {$0$};
	\draw[shift={(\x,\y)}]  (0.5,0.5) node[black] {$0$};
	\draw[shift={(\x,\y)}]  (0.5,3.5) node[black] {$0$};
	
	\draw[shift={(\x,\y)}]  (3.5,2.5) node[black] {$b$};
	\draw[shift={(\x,\y)}]  (1.5,2.5) node[RoyalBlue] {$0$};
	\draw[shift={(\x,\y)}]  (2.5,2.5) node[black] {$a$};
		
	\draw[->,shift={(\x,\y)}]  (5,2) to +(2,0);
	
	\def \x{16}
    \def \y{0}	
	\draw[step=1, gray, very thin, shift={(\x,\y)}] (0,0) grid +(4,4);

	\foreach \xx in {0,1,2,3}{
		\draw[shift={(\x,\y)}] (0.5+\xx,1.5) node[black] {$0$};
	}
	\draw[shift={(\x,\y)}]  (0.5,2.5) node[black] {$0$};
	\draw[shift={(\x,\y)}]  (0.5,0.5) node[black] {$0$};
	\draw[shift={(\x,\y)}]  (0.5,3.5) node[black] {$0$};
	
	\draw[shift={(\x,\y)}]  (3.5,2.5) node[black] {$b$};
	\draw[shift={(\x,\y)}]  (1.5,2.5) node[black] {$a$};
	\draw[shift={(\x,\y)}]  (2.5,2.5) node[RoyalBlue] {$0$};
		
	\draw[->,shift={(\x,\y)}]  (5,2) to +(2,0);
	
	\def \x{24}
    \def \y{0}	
	\draw[step=1, gray, very thin, shift={(\x,\y)}] (0,0) grid +(4,4);

	\foreach \xx in {0,1,2,3}{
		\draw[shift={(\x,\y)}] (0.5+\xx,1.5) node[black] {$0$};
	}
	\draw[shift={(\x,\y)}]  (0.5,2.5) node[black] {$0$};
	\draw[shift={(\x,\y)}]  (0.5,0.5) node[black] {$0$};
	\draw[shift={(\x,\y)}]  (0.5,3.5) node[black] {$0$};
	
	\draw[shift={(\x,\y)}]  (3.5,2.5) node[RoyalBlue] {$0$};
	\draw[shift={(\x,\y)}]  (1.5,2.5) node[black] {$a$};
	\draw[shift={(\x,\y)}]  (2.5,2.5) node[black] {$b$};
		
	\draw[->,shift={(\x,\y)}]  (5,2) to +(2,0);
	
	\def \x{32}
    \def \y{0}	
	\draw[step=1, gray, very thin, shift={(\x,\y)}] (0,0) grid +(4,4);

	\foreach \xx in {0,1,2,3}{
		\draw[shift={(\x,\y)}] (0.5+\xx,0.5) node[black] {$0$};
	}
	\draw[shift={(\x,\y)}]  (0.5,2.5) node[black] {$0$};
	\draw[shift={(\x,\y)}]  (0.5,1.5) node[black] {$0$};
	\draw[shift={(\x,\y)}]  (0.5,3.5) node[black] {$0$};
	
	\draw[shift={(\x,\y)}]  (3.5,2.5) node[RoyalBlue] {$0$};
	\draw[shift={(\x,\y)}]  (1.5,2.5) node[black] {$a$};
	\draw[shift={(\x,\y)}]  (2.5,2.5) node[black] {$b$};
		
	\draw[->,shift={(\x,\y)}]  (5,2) to +(2,0);
	
	\def \x{40}
    \def \y{0}	
	\draw[step=1, gray, very thin, shift={(\x,\y)}] (0,0) grid +(4,4);

	\foreach \xx in {0,1,2,3}{
		\draw[shift={(\x,\y)}] (0.5+\xx,0.5) node[black] {$0$};
	}
	\draw[shift={(\x,\y)}]  (0.5,2.5) node[black] {$0$};
	\draw[shift={(\x,\y)}]  (0.5,1.5) node[black] {$0$};
	\draw[shift={(\x,\y)}]  (0.5,3.5) node[black] {$0$};
	
	\draw[shift={(\x,\y)}]  (3.5,2.5) node[RoyalBlue] {$0$};
	\draw[shift={(\x,\y)}]  (1.5,1.5) node[black] {$a$};
	\draw[shift={(\x,\y)}]  (2.5,2.5) node[black] {$b$};
		
	\draw[->,shift={(\x,\y)}]  (5,2) to +(2,0);
	
	\def \x{48}
    \def \y{0}	
	\draw[step=1, gray, very thin, shift={(\x,\y)}] (0,0) grid +(4,4);

	\foreach \xx in {0,1,2,3}{
		\draw[shift={(\x,\y)}] (0.5+\xx,0.5) node[black] {$0$};
	}
	\draw[shift={(\x,\y)}]  (0.5,2.5) node[black] {$0$};
	\draw[shift={(\x,\y)}]  (0.5,1.5) node[black] {$0$};
	\draw[shift={(\x,\y)}]  (0.5,3.5) node[black] {$0$};
	
	\draw[shift={(\x,\y)}]  (3.5,2.5) node[RoyalBlue] {$0$};
	\draw[shift={(\x,\y)}]  (3.5,1.5) node[black] {$a$};
	\draw[shift={(\x,\y)}]  (2.5,2.5) node[black] {$b$};
		
	\draw[->,shift={(\x,\y)}]  (5,2) to +(2,0);
	
	\def \x{0}
    \def \y{-6}	
	\draw[step=1, gray, very thin, shift={(\x,\y)}] (0,0) grid +(4,4);

	\foreach \xx in {0,1,2,3}{
		\draw[shift={(\x,\y)}] (0.5+\xx,0.5) node[black] {$0$};
	}
	\draw[shift={(\x,\y)}]  (2.5,2.5) node[black] {$0$};
	\draw[shift={(\x,\y)}]  (2.5,1.5) node[black] {$0$};
	\draw[shift={(\x,\y)}]  (2.5,3.5) node[black] {$0$};
	
	\draw[shift={(\x,\y)}]  (3.5,2.5) node[RoyalBlue] {$0$};
	\draw[shift={(\x,\y)}]  (3.5,1.5) node[black] {$a$};
	\draw[shift={(\x,\y)}]  (1.5,2.5) node[black] {$b$};
		
	\draw[->,shift={(\x,\y)}]  (5,2) to +(2,0);
	
	\def \x{8}
    \def \y{-6}	
	\draw[step=1, gray, very thin, shift={(\x,\y)}] (0,0) grid +(4,4);

	\foreach \xx in {0,1,2,3}{
		\draw[shift={(\x,\y)}] (0.5+\xx,0.5) node[black] {$0$};
	}
	\draw[shift={(\x,\y)}]  (2.5,2.5) node[black] {$0$};
	\draw[shift={(\x,\y)}]  (2.5,1.5) node[black] {$0$};
	\draw[shift={(\x,\y)}]  (2.5,3.5) node[black] {$0$};
	
	\draw[shift={(\x,\y)}]  (3.5,2.5) node[black] {$a$};
	\draw[shift={(\x,\y)}]  (3.5,1.5) node[RoyalBlue] {$0$};
	\draw[shift={(\x,\y)}]  (1.5,2.5) node[black] {$b$};
		
	\draw[->,shift={(\x,\y)}]  (5,2) to +(2,0);
	
	\def \x{16}
    \def \y{-6}	
	\draw[step=1, gray, very thin, shift={(\x,\y)}] (0,0) grid +(4,4);

	\foreach \xx in {0,1,2,3}{
		\draw[shift={(\x,\y)}] (0.5+\xx,0.5) node[black] {$0$};
	}
	\draw[shift={(\x,\y)}]  (0.5,2.5) node[black] {$0$};
	\draw[shift={(\x,\y)}]  (0.5,1.5) node[black] {$0$};
	\draw[shift={(\x,\y)}]  (0.5,3.5) node[black] {$0$};
	
	\draw[shift={(\x,\y)}]  (3.5,2.5) node[black] {$a$};
	\draw[shift={(\x,\y)}]  (3.5,1.5) node[RoyalBlue] {$0$};
	\draw[shift={(\x,\y)}]  (2.5,2.5) node[black] {$b$};
		
	\draw[->,shift={(\x,\y)}]  (5,2) to +(2,0);
	
	\def \x{24}
    \def \y{-6}	
	\draw[step=1, gray, very thin, shift={(\x,\y)}] (0,0) grid +(4,4);

	\foreach \xx in {0,1,2,3}{
		\draw[shift={(\x,\y)}] (0.5+\xx,0.5) node[black] {$0$};
	}
	\draw[shift={(\x,\y)}]  (0.5,2.5) node[black] {$0$};
	\draw[shift={(\x,\y)}]  (0.5,1.5) node[black] {$0$};
	\draw[shift={(\x,\y)}]  (0.5,3.5) node[black] {$0$};
	
	\draw[shift={(\x,\y)}]  (3.5,2.5) node[black] {$a$};
	\draw[shift={(\x,\y)}]  (1.5,2.5) node[RoyalBlue] {$0$};
	\draw[shift={(\x,\y)}]  (2.5,2.5) node[black] {$b$};
	
\end{tikzpicture}
\caption{\label{fig:permutation}Permutation move, exchanging $a$ with $b$. Note that if $a$ is $1$ and $b$ is the marked particle, we are not allowed to exchange them directly, which is the reason we introduce the blue $0$.}
\end{figure}
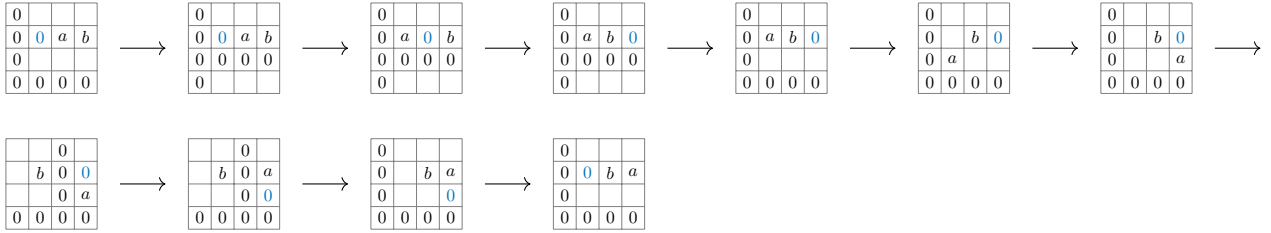

One last ingredient before constructing the large move $M$ is the \emph{jump move}, that will allow us to hop a  particle over a row of empty sites:
\begin{claim} \label{claim:jump_move}
Fix $y \in \zz^d$ and some site $\star \in y + \{-1\}\times[\l]$. Consider the configurations in which the column $y + \{0\} \times [\l]$ is empty, the column $y + \{-1\} \times [\l]$ contains at least one empty site not counting $\star$, and the column $y + \{1\} \times [\l]$ contains at least two empty sites. Then there exists a $T$-step move $M$ whose domain consists of these configurations, and in the final state, $M_T(\eta)$ the sites $\star$ and $\star + (2,0)$ are exchanged, and $z_T(\eta,\star) = \star + (2,0)$. Moreover, $\text{Loss}(M) = O(\log_2(\l))$ and $T = O(\l)$. See Figure \ref{fig:jump_move}.
\end{claim}

\begin{figure}
\begin{tikzpicture}[scale=0.3, every node/.style={scale=0.6}]
	\def \x{0};
    \def \y{0};
    
	\draw[step=1, gray, very thin, shift={(\x,\y)}] (0,0) grid +(3,4);

	\foreach \yy in {0,1,2,3}{
		\draw[shift={(\x,\y)}] (1.5,\yy+0.5) node[black] {$0$};
	}
	
	\draw[shift={(\x,\y)}]  (0.5,2.5) node[black] {$\star$};
	\draw[shift={(\x,\y)}]  (2.5,2.5) node[black] {$a$};
		
	\draw[->,shift={(\x,\y)}]  (4,2) to +(2,0);
	
	\def \x{7};
    \def \y{0};
    
	\draw[step=1, gray, very thin, shift={(\x,\y)}] (0,0) grid +(3,4);

	\foreach \yy in {0,1,2,3}{
		\draw[shift={(\x,\y)}] (0.5,\yy+0.5) node[black] {$0$};
	}
	
	\draw[shift={(\x,\y)}] (1.5,0.5) node[black] {$0$};	
	\draw[shift={(\x,\y)}] (2.5,0.5) node[black] {$0$};	
	
	\draw[shift={(\x,\y)}] (1.5,3.5) node[black] {$\star$};
	\draw[shift={(\x,\y)}] (2.5,2.5) node[black] {$a$};
		
	\draw[->,shift={(\x,\y)}]  (4,2) to +(2,0);
	
	\def \x{14};
    \def \y{0};
    
	\draw[step=1, gray, very thin, shift={(\x,\y)}] (0,0) grid +(3,4);

	\foreach \yy in {0,1,2,3}{
		\draw[shift={(\x,\y)}] (0.5,\yy+0.5) node[black] {$0$};
	}
	
	\draw[shift={(\x,\y)}] (1.5,0.5) node[black] {$0$};	
	\draw[shift={(\x,\y)}] (2.5,0.5) node[black] {$0$};	
	
	\draw[shift={(\x,\y)}] (1.5,3.5) node[black] {$a$};
	\draw[shift={(\x,\y)}] (2.5,2.5) node[black] {$\star$};
		
	\draw[->,shift={(\x,\y)}]  (4,2) to +(2,0);
	
	\def \x{21};
    \def \y{0};
    
	\draw[step=1, gray, very thin, shift={(\x,\y)}] (0,0) grid +(3,4);

	\foreach \yy in {0,1,2,3}{
		\draw[shift={(\x,\y)}] (1.5,\yy+0.5) node[black] {$0$};
	}
	
	\draw[shift={(\x,\y)}]  (0.5,2.5) node[black] {$a$};
	\draw[shift={(\x,\y)}]  (2.5,2.5) node[black] {$\star$};

\end{tikzpicture}
\caption{\label{fig:jump_move}Jump move. We start with exchanging the middle column with the right one, and then use proposition something to frame the box (during which $\star$ and $a$ may move). Then we use the permutation move and go back.}
\end{figure}
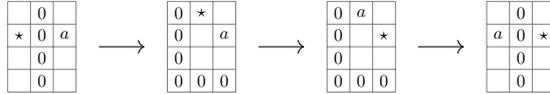

We are now ready to construct the move as shown in Figure \ref{fig:exchangepath}, exchanging the marked particle initially at $0$ with the particle/vacancy $\star$ initially at $(L+1)e$. First we use Claim \ref{claim:col_exchanged} in order to propagate the empty column, then we frame the box $[1,\l]\times[0,\l]$ using Claim \ref{claim:framing_move}. We can then frame the column $\{1\}\times[0,\l]$, and then use the permutation move (with the modification described above) in order to bring the marked particle to the position $(\l-1,\l-1)$. We then use the jump move (Claim \ref{claim:jump_move}), and clean up the modifications to the box $[0,\l]^2$. Using again Claim \ref{claim:col_exchanged}, we can move the marked particle to the bottom right box, and apply the same framing procedure as before in order to exchange it with $\star$ and move $\star$ to $(0,0)$. All that is left is to take the row of $0$s back to its original position.

For general $k,d$ the same proof as \cite{MST} will allow us to construct $M$, where the only modification is in the definition of the permutation move, which now takes into account the position of the marked particle. This is done in the exact same way as we have seen in Claim \ref{claim:permutation_move} for the case $k=d=2$.

%Here, we are only allowed to use a permutation move or a jump move (proposition 5.2.38) to move the marked particle to an empty site. This is why we introduced a slightly different definition of a good box. Here, we ask for any box in the path to contain an extra empty site besides being good. This way, when performing any permutation move between the marked particle $X_t$ and a neighbor $y$, we first exchange the $y$ with the extra empty site and then jump the marked particle to $y$ (which is now empty) as explained in figure \emph{todo figure "argument des chaises"}. Note that this procedure changes the time of the permutation move by a factor $\l$ and adds $d\log \l$ to its loss (since we need to know where the extra helping $0$ is). These changes, however, will only effect the constants in the final result.
 \end{proof}

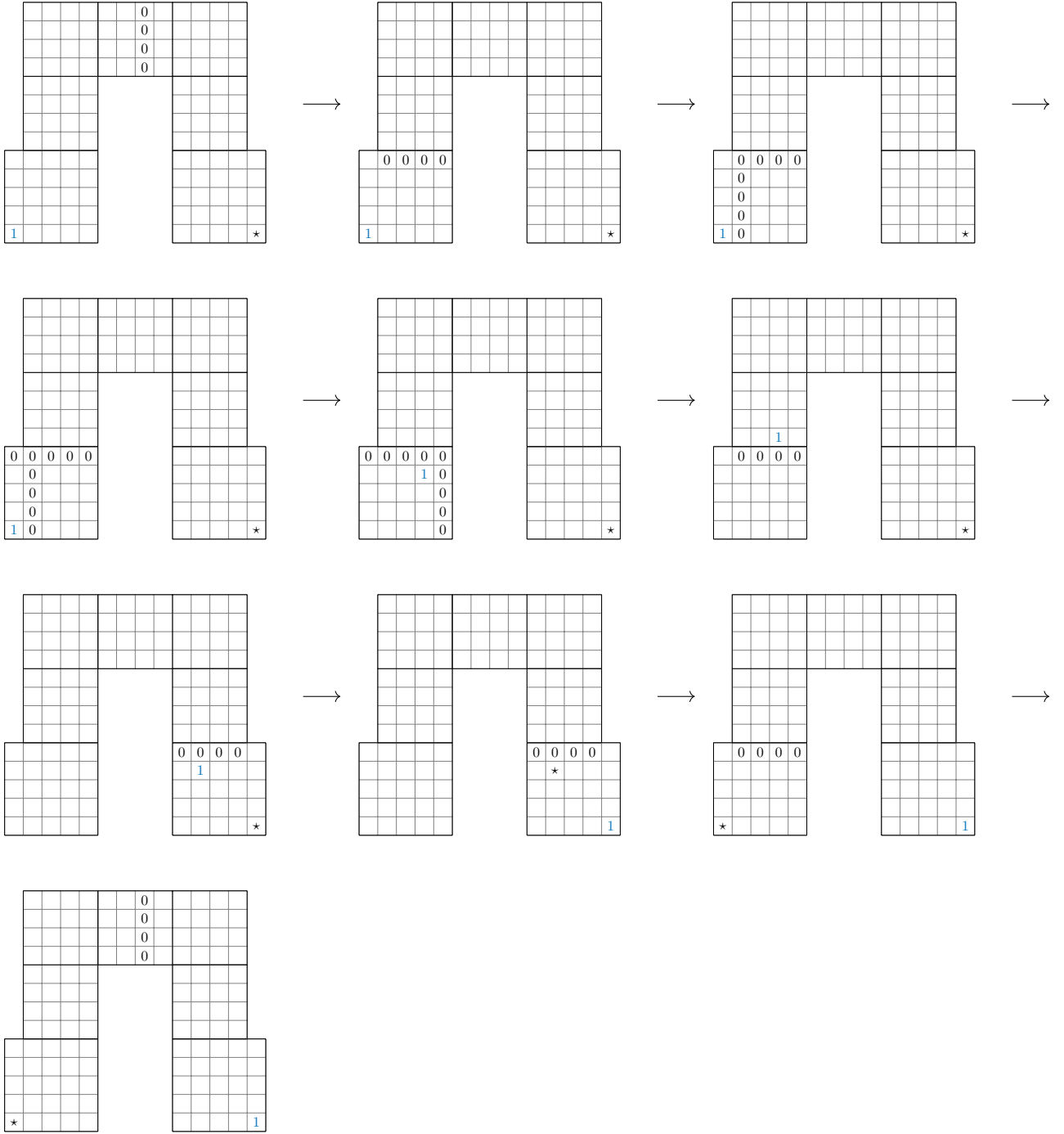
\begin{figure}[h]
\begin{tikzpicture}[scale=0.3, every node/.style={scale=0.6}]
	
	\def\xx{0}
	\def\yy{0}
	
	\draw[step=1, gray, very thin, xshift=\xx cm, yshift = \yy cm] (0,0) grid (5,5);
	\draw[step=5, black, very thin, xshift=\xx cm, yshift = \yy cm] (0,0) grid (5,5);
	\draw[step=1, gray, very thin,xshift=\xx+1 cm, yshift = \yy+5 cm] (0,0) grid (4,8);
	\draw[step=4, black, very thin,xshift=\xx+1 cm, yshift = \yy+5 cm] (0,0) grid (4,8);
	\draw[step=1, gray, very thin,xshift=\xx+5 cm, yshift = \yy+9 cm] (0,0) grid (8,4);
	\draw[step=4, black, very thin,xshift=\xx+5 cm, yshift = \yy+9 cm] (0,0) grid (8,4);
	\draw[step=1, gray, very thin,xshift=\xx+9 cm, yshift = \yy+5 cm] (0,0) grid (4,4);
	\draw[step=4, black, very thin,xshift=\xx+9 cm, yshift = \yy+5 cm] (0,0) grid (4,4);
	\draw[step=1, gray, very thin,xshift=\xx+9 cm, yshift = \yy cm] (0,0) grid (5,5);
	\draw[step=5, black, very thin,xshift=\xx+9 cm, yshift = \yy cm] (0,0) grid (5,5);
	
    \foreach \y in {0,...,3}{
		\draw (\xx+7.5,\yy+\y+9.5) node[black] {$0$};
	}
    \draw (\xx+0.5,\yy+0.5) node[RoyalBlue] {$1$};
    \draw (\xx+13.5,\yy+0.5) node[black] {$\star$};
    
    \draw[->]  (\xx+16,\yy+7.5) to (\xx+18,\yy+7.5);
    
    \def\xx{19}
    
	\draw[step=1, gray, very thin, xshift=\xx cm, yshift = \yy cm] (0,0) grid (5,5);
	\draw[step=5, black, very thin, xshift=\xx cm, yshift = \yy cm] (0,0) grid (5,5);
	\pgfmathsetmacro{\x}{\xx+1}
	\pgfmathsetmacro{\y}{\yy+5}
	\draw[step=1, gray, very thin,xshift=\x cm, yshift = \y cm] (0,0) grid (4,8);
	\draw[step=4, black, very thin,xshift=\x cm, yshift = \y cm] (0,0) grid (4,8);
	\pgfmathsetmacro{\x}{\xx+5}
	\pgfmathsetmacro{\y}{\yy+9}
	\draw[step=1, gray, very thin,xshift=\x cm, yshift = \y cm] (0,0) grid (8,4);
	\draw[step=4, black, very thin,xshift=\x cm, yshift = \y cm] (0,0) grid (8,4);
	\pgfmathsetmacro{\x}{\xx+9}
	\pgfmathsetmacro{\y}{\yy+5}
	\draw[step=1, gray, very thin,xshift=\x cm, yshift = \y cm] (0,0) grid (4,4);
	\draw[step=4, black, very thin,xshift=\x cm, yshift = \y cm] (0,0) grid (4,4);
	\pgfmathsetmacro{\x}{\xx+9}
	\pgfmathsetmacro{\y}{\yy+0}
	\draw[step=1, gray, very thin,xshift=\x cm, yshift = \y cm] (0,0) grid (5,5);
	\draw[step=5, black, very thin,xshift=\x cm, yshift = \y cm] (0,0) grid (5,5);
	
    \foreach \x in {0,...,3}{
		\draw (\xx+\x+1.5,\yy+4.5) node[black] {$0$};
	}
    \draw (\xx+0.5,\yy+0.5) node[RoyalBlue] {$1$};
    \draw (\xx+13.5,\yy+0.5) node[black] {$\star$};
    
    \draw[->]  (\xx+16,\yy+7.5) to (\xx+18,\yy+7.5);
    
    \def\xx{38}
    
    \draw[step=1, gray, very thin, xshift=\xx cm, yshift = \yy cm] (0,0) grid (5,5);
	\draw[step=5, black, very thin, xshift=\xx cm, yshift = \yy cm] (0,0) grid (5,5);
	\pgfmathsetmacro{\x}{\xx+1}
	\pgfmathsetmacro{\y}{\yy+5}
	\draw[step=1, gray, very thin,xshift=\x cm, yshift = \y cm] (0,0) grid (4,8);
	\draw[step=4, black, very thin,xshift=\x cm, yshift = \y cm] (0,0) grid (4,8);
	\pgfmathsetmacro{\x}{\xx+5}
	\pgfmathsetmacro{\y}{\yy+9}
	\draw[step=1, gray, very thin,xshift=\x cm, yshift = \y cm] (0,0) grid (8,4);
	\draw[step=4, black, very thin,xshift=\x cm, yshift = \y cm] (0,0) grid (8,4);
	\pgfmathsetmacro{\x}{\xx+9}
	\pgfmathsetmacro{\y}{\yy+5}
	\draw[step=1, gray, very thin,xshift=\x cm, yshift = \y cm] (0,0) grid (4,4);
	\draw[step=4, black, very thin,xshift=\x cm, yshift = \y cm] (0,0) grid (4,4);
	\pgfmathsetmacro{\x}{\xx+9}
	\pgfmathsetmacro{\y}{\yy+0}
	\draw[step=1, gray, very thin,xshift=\x cm, yshift = \y cm] (0,0) grid (5,5);
	\draw[step=5, black, very thin,xshift=\x cm, yshift = \y cm] (0,0) grid (5,5);
	
    \foreach \x in {1,...,4}{
		\draw (\xx+\x+0.5,\yy+4.5) node[black] {$0$};
	}
	\foreach \y in {0,...,3}{
		\draw (\xx+1.5,\yy+\y+0.5) node[black] {$0$};
	}
    \draw (\xx+0.5,\yy+0.5) node[RoyalBlue] {$1$};
    \draw (\xx+13.5,\yy+0.5) node[black] {$\star$};
    
    \draw[->]  (\xx+16,\yy+7.5) to (\xx+18,\yy+7.5);
    
    \def\xx{0}
    \def\yy{-16}
    
     \draw[step=1, gray, very thin, xshift=\xx cm, yshift = \yy cm] (0,0) grid (5,5);
	\draw[step=5, black, very thin, xshift=\xx cm, yshift = \yy cm] (0,0) grid (5,5);
	\pgfmathsetmacro{\x}{\xx+1}
	\pgfmathsetmacro{\y}{\yy+5}
	\draw[step=1, gray, very thin,xshift=\x cm, yshift = \y cm] (0,0) grid (4,8);
	\draw[step=4, black, very thin,xshift=\x cm, yshift = \y cm] (0,0) grid (4,8);
	\pgfmathsetmacro{\x}{\xx+5}
	\pgfmathsetmacro{\y}{\yy+9}
	\draw[step=1, gray, very thin,xshift=\x cm, yshift = \y cm] (0,0) grid (8,4);
	\draw[step=4, black, very thin,xshift=\x cm, yshift = \y cm] (0,0) grid (8,4);
	\pgfmathsetmacro{\x}{\xx+9}
	\pgfmathsetmacro{\y}{\yy+5}
	\draw[step=1, gray, very thin,xshift=\x cm, yshift = \y cm] (0,0) grid (4,4);
	\draw[step=4, black, very thin,xshift=\x cm, yshift = \y cm] (0,0) grid (4,4);
	\pgfmathsetmacro{\x}{\xx+9}
	\pgfmathsetmacro{\y}{\yy+0}
	\draw[step=1, gray, very thin,xshift=\x cm, yshift = \y cm] (0,0) grid (5,5);
	\draw[step=5, black, very thin,xshift=\x cm, yshift = \y cm] (0,0) grid (5,5);
	
    \foreach \x in {0,...,4}{
		\draw (\xx+\x+0.5,\yy+4.5) node[black] {$0$};
	}
	\foreach \y in {0,...,3}{
		\draw (\xx+1.5,\yy+\y+0.5) node[black] {$0$};
	}
	
    \draw (\xx+0.5,\yy+0.5) node[RoyalBlue] {$1$};
    \draw (\xx+13.5,\yy+0.5) node[black] {$\star$};
    
    \draw[->]  (\xx+16,\yy+7.5) to (\xx+18,\yy+7.5);
    
    \def\xx{19}
    
    \draw[step=1, gray, very thin, xshift=\xx cm, yshift = \yy cm] (0,0) grid (5,5);
	\draw[step=5, black, very thin, xshift=\xx cm, yshift = \yy cm] (0,0) grid (5,5);
	\pgfmathsetmacro{\x}{\xx+1}
	\pgfmathsetmacro{\y}{\yy+5}
	\draw[step=1, gray, very thin,xshift=\x cm, yshift = \y cm] (0,0) grid (4,8);
	\draw[step=4, black, very thin,xshift=\x cm, yshift = \y cm] (0,0) grid (4,8);
	\pgfmathsetmacro{\x}{\xx+5}
	\pgfmathsetmacro{\y}{\yy+9}
	\draw[step=1, gray, very thin,xshift=\x cm, yshift = \y cm] (0,0) grid (8,4);
	\draw[step=4, black, very thin,xshift=\x cm, yshift = \y cm] (0,0) grid (8,4);
	\pgfmathsetmacro{\x}{\xx+9}
	\pgfmathsetmacro{\y}{\yy+5}
	\draw[step=1, gray, very thin,xshift=\x cm, yshift = \y cm] (0,0) grid (4,4);
	\draw[step=4, black, very thin,xshift=\x cm, yshift = \y cm] (0,0) grid (4,4);
	\pgfmathsetmacro{\x}{\xx+9}
	\pgfmathsetmacro{\y}{\yy+0}
	\draw[step=1, gray, very thin,xshift=\x cm, yshift = \y cm] (0,0) grid (5,5);
	\draw[step=5, black, very thin,xshift=\x cm, yshift = \y cm] (0,0) grid (5,5);
	
    \foreach \x in {0,...,4}{
		\draw (\xx+\x+0.5,\yy+4.5) node[black] {$0$};
	}
	\foreach \y in {0,...,3}{
		\draw (\xx+4.5,\yy+\y+0.5) node[black] {$0$};
	}
	
    \draw (\xx+3.5,\yy+3.5) node[RoyalBlue] {$1$};
    \draw (\xx+13.5,\yy+0.5) node[black] {$\star$};
    
    \draw[->]  (\xx+16,\yy+7.5) to (\xx+18,\yy+7.5);
    
    \def\xx{38}
    
    \draw[step=1, gray, very thin, xshift=\xx cm, yshift = \yy cm] (0,0) grid (5,5);
	\draw[step=5, black, very thin, xshift=\xx cm, yshift = \yy cm] (0,0) grid (5,5);
	\pgfmathsetmacro{\x}{\xx+1}
	\pgfmathsetmacro{\y}{\yy+5}
	\draw[step=1, gray, very thin,xshift=\x cm, yshift = \y cm] (0,0) grid (4,8);
	\draw[step=4, black, very thin,xshift=\x cm, yshift = \y cm] (0,0) grid (4,8);
	\pgfmathsetmacro{\x}{\xx+5}
	\pgfmathsetmacro{\y}{\yy+9}
	\draw[step=1, gray, very thin,xshift=\x cm, yshift = \y cm] (0,0) grid (8,4);
	\draw[step=4, black, very thin,xshift=\x cm, yshift = \y cm] (0,0) grid (8,4);
	\pgfmathsetmacro{\x}{\xx+9}
	\pgfmathsetmacro{\y}{\yy+5}
	\draw[step=1, gray, very thin,xshift=\x cm, yshift = \y cm] (0,0) grid (4,4);
	\draw[step=4, black, very thin,xshift=\x cm, yshift = \y cm] (0,0) grid (4,4);
	\pgfmathsetmacro{\x}{\xx+9}
	\pgfmathsetmacro{\y}{\yy+0}
	\draw[step=1, gray, very thin,xshift=\x cm, yshift = \y cm] (0,0) grid (5,5);
	\draw[step=5, black, very thin,xshift=\x cm, yshift = \y cm] (0,0) grid (5,5);
	
    \foreach \x in {1,...,4}{
		\draw (\xx+\x+0.5,\yy+4.5) node[black] {$0$};
	}

    \draw (\xx+3.5,\yy+5.5) node[RoyalBlue] {$1$};
    \draw (\xx+13.5,\yy+0.5) node[black] {$\star$};
    
    \draw[->]  (\xx+16,\yy+7.5) to (\xx+18,\yy+7.5);
    
    \def\xx{0}
    \def\yy{-32}
    
    \draw[step=1, gray, very thin, xshift=\xx cm, yshift = \yy cm] (0,0) grid (5,5);
	\draw[step=5, black, very thin, xshift=\xx cm, yshift = \yy cm] (0,0) grid (5,5);
	\pgfmathsetmacro{\x}{\xx+1}
	\pgfmathsetmacro{\y}{\yy+5}
	\draw[step=1, gray, very thin,xshift=\x cm, yshift = \y cm] (0,0) grid (4,8);
	\draw[step=4, black, very thin,xshift=\x cm, yshift = \y cm] (0,0) grid (4,8);
	\pgfmathsetmacro{\x}{\xx+5}
	\pgfmathsetmacro{\y}{\yy+9}
	\draw[step=1, gray, very thin,xshift=\x cm, yshift = \y cm] (0,0) grid (8,4);
	\draw[step=4, black, very thin,xshift=\x cm, yshift = \y cm] (0,0) grid (8,4);
	\pgfmathsetmacro{\x}{\xx+9}
	\pgfmathsetmacro{\y}{\yy+5}
	\draw[step=1, gray, very thin,xshift=\x cm, yshift = \y cm] (0,0) grid (4,4);
	\draw[step=4, black, very thin,xshift=\x cm, yshift = \y cm] (0,0) grid (4,4);
	\pgfmathsetmacro{\x}{\xx+9}
	\pgfmathsetmacro{\y}{\yy+0}
	\draw[step=1, gray, very thin,xshift=\x cm, yshift = \y cm] (0,0) grid (5,5);
	\draw[step=5, black, very thin,xshift=\x cm, yshift = \y cm] (0,0) grid (5,5);
	
    \foreach \x in {1,...,4}{
		\draw (\xx+\x+8.5,\yy+4.5) node[black] {$0$};
	}

    \draw (\xx+10.5,\yy+3.5) node[RoyalBlue] {$1$};
    \draw (\xx+13.5,\yy+0.5) node[black] {$\star$};
    
    \draw[->]  (\xx+16,\yy+7.5) to (\xx+18,\yy+7.5);
    
    \def\xx{19}
    \def\yy{-32}
    
    \draw[step=1, gray, very thin, xshift=\xx cm, yshift = \yy cm] (0,0) grid (5,5);
	\draw[step=5, black, very thin, xshift=\xx cm, yshift = \yy cm] (0,0) grid (5,5);
	\pgfmathsetmacro{\x}{\xx+1}
	\pgfmathsetmacro{\y}{\yy+5}
	\draw[step=1, gray, very thin,xshift=\x cm, yshift = \y cm] (0,0) grid (4,8);
	\draw[step=4, black, very thin,xshift=\x cm, yshift = \y cm] (0,0) grid (4,8);
	\pgfmathsetmacro{\x}{\xx+5}
	\pgfmathsetmacro{\y}{\yy+9}
	\draw[step=1, gray, very thin,xshift=\x cm, yshift = \y cm] (0,0) grid (8,4);
	\draw[step=4, black, very thin,xshift=\x cm, yshift = \y cm] (0,0) grid (8,4);
	\pgfmathsetmacro{\x}{\xx+9}
	\pgfmathsetmacro{\y}{\yy+5}
	\draw[step=1, gray, very thin,xshift=\x cm, yshift = \y cm] (0,0) grid (4,4);
	\draw[step=4, black, very thin,xshift=\x cm, yshift = \y cm] (0,0) grid (4,4);
	\pgfmathsetmacro{\x}{\xx+9}
	\pgfmathsetmacro{\y}{\yy+0}
	\draw[step=1, gray, very thin,xshift=\x cm, yshift = \y cm] (0,0) grid (5,5);
	\draw[step=5, black, very thin,xshift=\x cm, yshift = \y cm] (0,0) grid (5,5);
	
    \foreach \x in {1,...,4}{
		\draw (\xx+\x+8.5,\yy+4.5) node[black] {$0$};
	}

    \draw (\xx+13.5,\yy+0.5) node[RoyalBlue] {$1$};
    \draw (\xx+10.5,\yy+3.5) node[black] {$\star$};
    
    \draw[->]  (\xx+16,\yy+7.5) to (\xx+18,\yy+7.5);
    
    \def\xx{38}
    \def\yy{-32}
    
    \draw[step=1, gray, very thin, xshift=\xx cm, yshift = \yy cm] (0,0) grid (5,5);
	\draw[step=5, black, very thin, xshift=\xx cm, yshift = \yy cm] (0,0) grid (5,5);
	\pgfmathsetmacro{\x}{\xx+1}
	\pgfmathsetmacro{\y}{\yy+5}
	\draw[step=1, gray, very thin,xshift=\x cm, yshift = \y cm] (0,0) grid (4,8);
	\draw[step=4, black, very thin,xshift=\x cm, yshift = \y cm] (0,0) grid (4,8);
	\pgfmathsetmacro{\x}{\xx+5}
	\pgfmathsetmacro{\y}{\yy+9}
	\draw[step=1, gray, very thin,xshift=\x cm, yshift = \y cm] (0,0) grid (8,4);
	\draw[step=4, black, very thin,xshift=\x cm, yshift = \y cm] (0,0) grid (8,4);
	\pgfmathsetmacro{\x}{\xx+9}
	\pgfmathsetmacro{\y}{\yy+5}
	\draw[step=1, gray, very thin,xshift=\x cm, yshift = \y cm] (0,0) grid (4,4);
	\draw[step=4, black, very thin,xshift=\x cm, yshift = \y cm] (0,0) grid (4,4);
	\pgfmathsetmacro{\x}{\xx+9}
	\pgfmathsetmacro{\y}{\yy+0}
	\draw[step=1, gray, very thin,xshift=\x cm, yshift = \y cm] (0,0) grid (5,5);
	\draw[step=5, black, very thin,xshift=\x cm, yshift = \y cm] (0,0) grid (5,5);
	
    \foreach \x in {0,...,3}{
		\draw (\xx+\x+1.5,\yy+4.5) node[black] {$0$};
	}
    \draw (\xx+13.5,\yy+0.5) node[RoyalBlue] {$1$};
    \draw (\xx+0.5,\yy+0.5) node[black] {$\star$};
    
    \draw[->]  (\xx+16,\yy+7.5) to (\xx+18,\yy+7.5);
    
    \def\xx{0}
    \def\yy{-48}
    
    \draw[step=1, gray, very thin, xshift=\xx cm, yshift = \yy cm] (0,0) grid (5,5);
	\draw[step=5, black, very thin, xshift=\xx cm, yshift = \yy cm] (0,0) grid (5,5);
	\pgfmathsetmacro{\x}{\xx+1}
	\pgfmathsetmacro{\y}{\yy+5}
	\draw[step=1, gray, very thin,xshift=\x cm, yshift = \y cm] (0,0) grid (4,8);
	\draw[step=4, black, very thin,xshift=\x cm, yshift = \y cm] (0,0) grid (4,8);
	\pgfmathsetmacro{\x}{\xx+5}
	\pgfmathsetmacro{\y}{\yy+9}
	\draw[step=1, gray, very thin,xshift=\x cm, yshift = \y cm] (0,0) grid (8,4);
	\draw[step=4, black, very thin,xshift=\x cm, yshift = \y cm] (0,0) grid (8,4);
	\pgfmathsetmacro{\x}{\xx+9}
	\pgfmathsetmacro{\y}{\yy+5}
	\draw[step=1, gray, very thin,xshift=\x cm, yshift = \y cm] (0,0) grid (4,4);
	\draw[step=4, black, very thin,xshift=\x cm, yshift = \y cm] (0,0) grid (4,4);
	\pgfmathsetmacro{\x}{\xx+9}
	\pgfmathsetmacro{\y}{\yy+0}
	\draw[step=1, gray, very thin,xshift=\x cm, yshift = \y cm] (0,0) grid (5,5);
	\draw[step=5, black, very thin,xshift=\x cm, yshift = \y cm] (0,0) grid (5,5);
	
    \foreach \y in {0,...,3}{
		\draw (\xx+7.5,\yy+\y+9.5) node[black] {$0$};
	}
    \draw (\xx+13.5,\yy+0.5) node[RoyalBlue] {$1$};
    \draw (\xx+0.5,\yy+0.5) node[black] {$\star$};

\end{tikzpicture}

\caption{\label{fig:exchangepath}Permuting the marked particle $\color{RoyalBlue}1$ with a site $\star$.}

\end{figure}

\begin{defn}
Let $M$ be a $T$-step move and $\eta$ such that $\eta\left(0\right)=1$.
At each time $t$ we can track the position of the particle started
at $0$, denoting it by $X_{t}$. Then the translated move $M^{\tau}$
is given by $M_{t}^{\tau}\eta=\tau_{X_{t}}M_{t}\eta$.
 with $x_{t}^{\tau}=x_{t}-X_{t}$ and $y_{t}^{\tau}=y_{t}-X_{t}$, so that for all $t$, $M^\tau_t\eta(0)=1$. 
\end{defn}

\begin{proof}
[Proof of inequality (\ref{comparaison})] We now use the move defined above in order to compare
both diffusion matrices. First, note that for $\eta\in\text{Dom}_0M$
and $i \in \zz_\l^d$ a neighbor of the origin,
\[
e_1\cdot i+f\left(\tau_{i}\eta^{0,i}\right)-f\left(\eta\right)=\sum_{t=1}^{T}\One_{x_{t}^{\tau}=0}e_1\cdot y_{t}^{\tau}+f\left(M_{t}^{\tau}\eta\right)-f\left(M_{t-1}^{\tau}\eta\right).
\]
By the Cauchy-Schwarz inequality
\[
\left(e_1\cdot i+f\left(\tau_{i}\eta^{0,i}\right)-f\left(\eta\right)\right)^{2}\le T\sum_{t=1}^{T}c_{x_{t}^{\tau}y_{t}^{\tau}}(M_{t}^{\tau}\eta)\,\left(\One_{x_{t}^{\tau}=0}e_1\cdot y_{t}^{\tau}+f\left(M_{t}^{\tau}\eta\right)-f\left(M_{t-1}^{\tau}\eta\right)\right)^{2},
\]
where we have used the fact that, by definition of a move, $c_{x_t y_t}(M_t \eta)=1$.
Therefore,
\begin{multline*}
\mu_{0}\left[\overline{\eta}_{0i}\left(e_1\cdot i+f\left(\tau_{i}\eta^{0,i}\right)-f\left(\eta\right)\right)^{2}\right]\\
\le T\mu_{0}\left[\overline{\eta}_{0i}\sum_{t=1}^{T}c_{x_{t}^{\tau}y_{t}^{\tau}}(M_{t}^{\tau}\eta)\,\left(\One_{x_{t}^{\tau}=0}e_1\cdot y_{t}^{\tau}+f\left(M_{t}^{\tau}\eta\right)-f\left(M_{t-1}^{\tau}\eta\right)\right)^{2}\right]\\
=T\sum_{\eta}\mu_{0}\left(\eta\right)\overline{\eta}_{0i}\sum_{t}\sum_{\eta^{\prime}}\sum_{y\sim0}\One_{\eta^{\prime}=M_{t-1}^{\tau}\eta}\One_{0=x_{t}^{\tau}}\One_{y=y_{t}^{\tau}}c_{0y}\left(\eta^{\prime}\right)\left(u\cdot y+f\left(\tau_{y}\eta^{\prime0y}\right)-f\left(\eta^{\prime}\right)\right)^{2}\\
+T\sum_{\eta}\mu_{0}\left(\eta\right)\overline{\eta}_{0i}\sum_{t}\sum_{\eta^{\prime}}\sum_{x\ne0}\sum_{y\sim x}\One_{\eta^{\prime}=M_{t-1}^{\tau}\eta}\One_{x=x_{t}^{\tau}}\One_{y=y_{t}^{\tau}}c_{xy}\left(\eta^{\prime}\right)\left(f\left(\eta^{\prime xy}\right)-f\left(\eta^{\prime}\right)\right)^{2}\\
\le T^{2}2^{\text{Loss}M}\sum_{\eta^{\prime}}\sum_{y\sim0}\mu_{0}\left(\eta^{\prime}\right)c_{0y}\left(\eta^{\prime}\right)\left(e_1\cdot y+f\left(\tau_{y}\eta^{\prime0y}\right)-f\left(\eta^{\prime}\right)\right)^{2}\\
+T^{2}2^{\text{Loss}M}\sum_{\eta^{\prime}}\sum_{x\neq0}\sum_{y\sim x}\mu_{0}\left(\eta^{\prime}\right)c_{xy}\left(\eta^{\prime}\right)\left(f\left(\eta^{\prime xy}\right)-f\left(\eta^{\prime}\right)\right)^{2}.
\end{multline*}
In order to obtain the last inequality, we first replaced $\mu_0(\eta)$ with $\mu_0(\eta')$. Then, given $\eta',t,x_t^\tau$ and $y_t^\tau$, we estimated the sum $\sum_{\eta}\One_{\eta'=M_{t-1}^\tau}\One_{x=x_t^\tau}\One_{y=y_t^\tau} \le 2^{\text{Loss}(M)}$. Next, the sum over $t$ contributes in another factor $T$, and finally, we bound $\overline{\eta}_{0i} \le 1$.\\

We have hence shown that:  $e_1\cdot D_{\text{aux}}e_1 \le T^{2}2^{\text{Loss}M} D(q)$.\\

Now, plugging the appropriate bounds for $\text{Loss}M$, $L$ and $\l$ gives:

\begin{itemize}
    \item If $k=2$, $$ T^{2}2^{\text{Loss}M} \le \exp(c \log(\l) \l) \le \exp\left(c \log(\nicefrac{1}{q})^{d} q^{-\frac{1}{d-1}}\right).$$
    
    \item If $k\ge3$, $$T^{2}2^{\text{Loss}M} \le \exp(c \l^d) \le \exp\left(c \, \text{exp}_{(k-2)}(q^{-\frac{1}{d-k+1}})^d\right) \le \text{exp}_{(k-1)}(c q^{-\frac{1}{d-k+1}}).$$
\end{itemize}

This concludes the proof of the inequality (\ref{comparaison}). Together with Proposition \ref{Daux}, we obtain the lower bound of Theorem \ref{mainthm}.
\end{proof}

\section{Upper bound}

In order to find an upper bound, we will look for a suitable test function
to plug in \eqref{variational_principle}. Without loss of generality
we consider $u=e_{1}$.

Let 
\begin{equation}
\label{eq:l_upperbound}
\l = \begin{cases}
cq^{-\frac{1}{d-1}} & \text{if} \quad k=2, \\
\exp_{(k-2)}\left(cq^{-\frac{1}{d-k+1}}\right) & \text{if} \quad k\ge3,
\end{cases}
\end{equation}
for $c>0$ that may depend on $d$ and $k$ but not on $q$. 

We now define the $k$ \emph{neighbor bootstrap percolation} on the box $\left[-\l,\l\right]^{d}$.
It is usually seen as a deterministic process defined on $\Omega_\l := \left\{ 0,1\right\} ^{\left[-\l,\l\right]^{d}}$, but for our needs it is convenient to define the \emph{bootstrap percolation map} $\BP : \Omega_\l \rightarrow \Omega_\l$:
%It depends on an initial configuration $\eta \in \Omega_\l$, and evolves according to:
\[
\BP(\eta)(x)=\begin{cases}
0 & \text{if }\eta(x)=0,\\
0 & \text{if }\sum_{y\sim x \in \left[-\l,\l\right]^{d}}\left(1-\eta(y)\right)\ge k,\\
1 & \text{otherwise}.
\end{cases}
\]
That is, empty sites remain empty and occupied sites become empty if they have at least $k$ empty neighbors. Let $\BP^\infty (\eta)$ be the limiting configuration, that is:
\[
\BP^\infty(\eta)(x)=\begin{cases}
0 & \text{if } \exists t \in \nn, \BP^t(\eta)(x)=0,\\
1 & \text{otherwise}.
\end{cases}
\]
For more details about bootstrap percolation, see e.g. \cite{M17}. 

The following observation clarifies the relation between bootstrap percolation and the KA model.
\begin{observation}
\label{obs:BPandKA}Fix $\eta \in \Omega_\l$. Let $x,y$ two sites in $[-\l,\l]^d$ such that $c_{xy}(\eta) = 1$. Then $\BP^\infty(\eta) = \BP^\infty(\eta^{xy})$.
\end{observation}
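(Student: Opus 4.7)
The plan is to exploit monotonicity of the bootstrap percolation map and to sandwich both $\eta$ and $\eta^{xy}$ between the same pair of configurations that necessarily have the same closure. The only nontrivial case is $\eta(x)\neq\eta(y)$ (else $\eta^{xy}=\eta$ and there is nothing to prove), so I will assume without loss of generality that $\eta(x)=1$ and $\eta(y)=0$.

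First I would introduce the auxiliary configuration $\tilde\eta := \eta\wedge\eta^{xy}$, i.e.\ the configuration that coincides with $\eta$ outside $\{x,y\}$ and has $\tilde\eta(x)=\tilde\eta(y)=0$. The map $\BP$ is monotone (fewer particles can only make more sites lose particles), so iterating this gives
\[
\BP^\infty(\tilde\eta)\le \BP^\infty(\eta)\quad\text{and}\quad \BP^\infty(\tilde\eta)\le \BP^\infty(\eta^{xy}).
\]
The whole proof reduces to establishing the two reverse inequalities.

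The key observation, and the only place where the constraint $c_{xy}(\eta)=1$ is used, is that after a single $\BP$ step the sites $x$ and $y$ are already empty in both $\eta$ and $\eta^{xy}$. Indeed, the definition of $c_{xy}$ guarantees that $x$ has at least $k-1$ empty neighbors distinct from $y$; since $\eta(y)=0$, the site $x$ has at least $k$ empty neighbors in $\eta$, so $\BP(\eta)(x)=0$. Symmetrically, in $\eta^{xy}$ the site $x$ is empty, which combined with the $k-1$ empty neighbors of $y$ distinct from $x$ forces $\BP(\eta^{xy})(y)=0$. In either case, after one $\BP$ step one obtains a configuration pointwise dominated by $\tilde\eta$: on the two sites $x,y$ both configurations have value $0=\tilde\eta$, and on every other site $z$ the inequality $\BP(\eta)(z)\le \eta(z)=\tilde\eta(z)$ (resp.\ with $\eta^{xy}$) is immediate.

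Applying monotonicity of $\BP^\infty$ to the inequalities $\BP(\eta)\le\tilde\eta$ and $\BP(\eta^{xy})\le\tilde\eta$ then yields $\BP^\infty(\eta)\le\BP^\infty(\tilde\eta)$ and $\BP^\infty(\eta^{xy})\le\BP^\infty(\tilde\eta)$, completing the sandwich and giving
\[
\BP^\infty(\eta)=\BP^\infty(\tilde\eta)=\BP^\infty(\eta^{xy}).
\]
There is no real obstacle here; the content of the proof is the translation of the KA constraint into the statement that one $\BP$ step collapses the swapped pair $\{x,y\}$ to two empty sites, after which monotonicity does the rest.
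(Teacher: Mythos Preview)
Your proof is correct and follows essentially the same idea as the paper's: the constraint $c_{xy}(\eta)=1$ forces the occupied site among $\{x,y\}$ to have at least $k$ empty neighbors in both $\eta$ and $\eta^{xy}$, so one step of $\BP$ empties it and the two configurations collapse to the same closure. The paper states this in a single sentence, while you have made the monotonicity-and-sandwich argument explicit via the auxiliary configuration $\tilde\eta$; the content is the same.
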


\begin{proof}
Since in a legal KA move the particle has at least $k$ empty
neighbors both before and after the exchange, it would be emptied
for both states.
\end{proof}
We say that a site $x$ is in the bootstrap percolation cluster of the origin if there
is a nearest neighbor path $0,x_{1},\dots,x_{n}=x$ such that $\BP^{\infty}(\eta)\left(x_{i}\right)=0$
for $i=1,\dots,n$. For any $\eta \in \Omega$, let $f(\eta)$ be the first coordinate of the rightmost
site in the bootstrap percolation cluster of the origin for $\restriction{\eta}{[-\l,\l]^d}$.

\begin{defn}
Fix $\eta \in \Omega_\l$. We define $\mathcal{B}$ as the event that the bootstrap percolation cluster of the origin contains a site of $\infty$-norm at least $\l-1$.  
\end{defn}

\begin{claim}\label{bootstrapestimate}
\label{claim:bpcluster_small}There exists a constant $\gamma>0$
such that $\mu\left(\mathcal{B}\right)\le e^{-\gamma \l}$.
\end{claim}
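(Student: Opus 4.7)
The plan is to observe that, on the event $\mathcal{B}$, the bootstrap percolation cluster of the origin is itself an internally spanned connected set in the classical bootstrap percolation sense, and then to apply the Aizenman--Lebowitz lemma together with sub-critical internal spanning estimates.

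First, I would show that on $\mathcal{B}$ the connected component $C \subset [-\l,\l]^d$ of the origin inside $\BP^\infty(\eta)^{-1}(0)$ is internally spanned. The key observation is that $C$ is a connected component of the empty region, so every vertex of $[-\l,\l]^d \setminus C$ adjacent to $C$ remained occupied throughout the bootstrap dynamics; consequently, whenever a site $x \in C$ was emptied at some step, the $k$ empty neighbors that justified this emptying necessarily lay inside $C$. Hence BP restricted to $C$, with an occupied boundary condition outside, already empties all of $C$. On $\mathcal{B}$ the set $C$ has diameter at least $\l-1$.

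Next, I would invoke the Aizenman--Lebowitz lemma (see e.g.~\cite{M17}) to deduce, from the existence of an internally spanned connected set of diameter at least $\l - 1$ inside $[-\l,\l]^d$, the existence of an internally spanned connected subset of diameter in $[\floor{\l/3}, 2\floor{\l/3}]$ somewhere in that box. A union bound over the $O(\l^d)$ possible positions then reduces the claim to showing that a given box of side $\Theta(\l)$ is internally spanned with probability at most $e^{-(\gamma+o(1))\l}$ for some $\gamma>0$.

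Finally, I would invoke standard sub-critical internal spanning estimates; the scale $\l$ in~(\ref{eq:l_upperbound}) is chosen precisely so as to sit well below the critical bootstrap percolation length $L_c(q)$. For $k=2$ this follows from an elementary counting bound: internally spanning a box of side $s$ forces at least one empty site in each of $s$ parallel hyperplanes, so the probability is at most $(s^{d-1}q)^s$, which with $s\approx cq^{-1/(d-1)}$ is $e^{-\Theta(s)}$, with room to absorb the $\l^d$ factor from the union bound. For $k\ge 3$ the estimate is inductive in $k$: internally spanning a $d$-dimensional box in the $(k,d)$-model forces an internal spanning of a $(d-1)$-dimensional slice in the $(k-1,d-1)$-model, and the choice of $\l$ in~(\ref{eq:l_upperbound}) is exactly the iterated-exponential sub-critical threshold produced by this recursion, in the spirit of~\cite{BFT}. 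The main obstacle is to keep track of the constants through this hierarchical induction with enough slack that the $\l^d$ union-bound factor does not spoil the exponential decay in $\l$.
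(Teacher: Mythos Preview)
The paper disposes of this claim in one line by citing \cite[Lemma 5.1]{CM} and noting that the constant $c$ in \eqref{eq:l_upperbound} is chosen as $\beta_-(d,k)$ from that reference. Your proposal is not a different approach so much as an expansion of what lies behind that citation: the Aizenman--Lebowitz reduction followed by sub-critical internal-spanning bounds is precisely the machinery of \cite{CM}. Your sketch is sound, with one correction worth flagging.

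In the $k=2$ step you assert that internally spanning a box of side $s$ forces at least one initially empty site in \emph{each} of the $s$ parallel hyperplanes. This is false: for instance, in two dimensions a $3\times 3$ box with top and bottom rows initially empty and middle row fully occupied is internally spanned, yet the middle row contains no initially empty site. The correct deterministic obstruction is that no \emph{two consecutive} parallel hyperplanes can be entirely occupied initially (if they were, no site in either could ever acquire two empty neighbours). This gives at least $\lceil s/2\rceil$ hyperplanes containing an empty site, hence a bound of the form $\binom{s}{\lceil s/2\rceil}(s^{d-1}q)^{\lceil s/2\rceil}\le (4s^{d-1}q)^{s/2}$, which for $s\approx cq^{-1/(d-1)}$ with $c$ small enough is still $e^{-\Theta(s)}$ and absorbs the $\l^d$ union-bound factor. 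So the slip is harmless for the conclusion, but the argument as written needs this adjustment.
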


\begin{proof}
This is a direct consequence of \cite[Lemma 5.1]{CM} and the
choice of $\l$ in (\ref{eq:l_upperbound}), as we set $c = \beta_{-}(d,k)$ following the notations of \cite{CM}.
\end{proof}

\begin{defn}
Fix $x,y\in\zz^{d}$ and $\eta \in \Omega$. We say that the edge $xy$ is \emph{pivotal} for $\eta$
if $c_{xy}(\eta)=1$ and $f\left(\eta\right)\neq f\left(\eta^{xy}\right)$.
\end{defn}

\begin{claim}
Fix $x,y \in \zz^d$. The edge $xy$ can only be pivotal if at least one of its endpoints is on the inner boundary of $\left[-\l,\l\right]^{d}$, and $\mathcal{B}$ must occur for either $\eta$ or $\eta^{xy}$.
\end{claim}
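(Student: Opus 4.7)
The first statement follows directly from Observation \ref{obs:BPandKA}. If both $x$ and $y$ lie strictly inside $[-\l,\l]^d$ (that is, $\|x\|_\infty,\|y\|_\infty\le \l-1$), then all their neighbors remain in $[-\l,\l]^d$, so $c_{xy}(\restriction{\eta}{[-\l,\l]^d}) = c_{xy}(\eta) = 1$. Observation \ref{obs:BPandKA} then gives $\BP^\infty(\restriction{\eta}{[-\l,\l]^d}) = \BP^\infty(\restriction{\eta^{xy}}{[-\l,\l]^d})$, so the BP cluster of the origin, and hence $f$, is unchanged, contradicting pivotality. The remaining cases are immediate: if both endpoints lie outside $[-\l,\l]^d$ the restrictions coincide, while if just one does, the other must sit on the inner boundary.

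For the second statement, suppose towards contradiction that $\mathcal{B}$ occurs neither for $\eta$ nor for $\eta^{xy}$. By the first part we may assume $\|x\|_\infty = \l$, so $\|y\|_\infty\ge \l-1$. The failure of $\mathcal{B}$ forces each BP cluster of the origin into $B := \{z : \|z\|_\infty \le \l-2\}$, which contains neither $x$ nor $y$; in particular $\restriction{\eta}{B} = \restriction{\eta^{xy}}{B}$. The plan is to show that the BP cluster of the origin is the same whether bootstrap percolation is run on $[-\l,\l]^d$ or on the smaller box $B$. Granting this localization, the clusters for $\eta$ and $\eta^{xy}$ must agree, producing $f(\eta) = f(\eta^{xy})$ and hence the contradiction.

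To establish the localization, let $E_t$ and $E_t^B$ denote the empty sets at the $t$-th BP step on $[-\l,\l]^d$ and on $B$ respectively, starting from the same restricted configuration, and let $C,C^B$ denote the corresponding clusters of the origin. Monotonicity yields $E_\infty^B \subseteq E_\infty$ and hence $C^B \subseteq C$, so it suffices to verify the reverse inclusion. We induct on $t$ to show that every site $z \in C$ with $z \in E_t$ also satisfies $z \in E_t^B$: if such a $z$ is first emptied at step $t$, its $k$ empty neighbors at step $t-1$ belong to $\BP^\infty(\restriction{\eta}{[-\l,\l]^d})$ and are adjacent to $z \in C$, hence themselves in the connected component $C \subseteq B$; the inductive hypothesis then places these neighbors in $E_{t-1}^B$, forcing $z \in E_t^B$. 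Consequently $C \subseteq E_\infty^B$, and since $C$ is connected through the origin, $C \subseteq C^B$. The main subtlety is exactly this confinement-of-cascade step, which crucially uses that $C$ is a connected component of empty-after-BP sites so that any empty neighbor of a site in $C$ is itself in $C$; the rest is routine monotonicity bookkeeping.
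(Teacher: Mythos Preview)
Your proof is correct. The first assertion is handled exactly as in the paper: the same case split (both endpoints outside, both strictly inside, otherwise) with Observation~\ref{obs:BPandKA} disposing of the middle case.

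For the second assertion the two arguments diverge. The paper argues in one line that if $xy$ is pivotal then $x$ or $y$ must itself lie in the bootstrap cluster of the origin for $\eta$ or for $\eta^{xy}$; since both sites have $\|\cdot\|_\infty\ge\ell-1$, the event $\mathcal{B}$ follows. You instead take the contrapositive and prove a localization lemma: if $\mathcal{B}$ fails for both configurations, the cluster of the origin coincides with the cluster for BP run on the sub-box $B=\{z:\|z\|_\infty\le\ell-2\}$, which depends only on $\restriction{\eta}{B}=\restriction{\eta^{xy}}{B}$. The paper's assertion tacitly relies on the fact that the symmetric difference of the two $\BP^\infty$ configurations is connected through $\{x,y\}$, while your argument makes the confinement of the BP cascade explicit via an induction on the BP time. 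Both routes encode the same principle (a change at $x,y$ can only affect the origin's cluster if that cluster reaches back to $x$ or $y$); yours is longer but fully self-contained, and the localization statement is reusable. One minor imprecision: $C$ is not literally a connected component of $E_\infty$ when $0\notin E_\infty$, but the only property you actually use---that any $E_\infty$-site adjacent to a point of $C$ again lies in $C$---follows directly from the paper's definition of the cluster, so your induction goes through unchanged.
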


\begin{proof}
If both $x$ and $y$ are outside $[-\l,\l]^d$, $xy$ is not pivotal.
If both are inside $[-\l+1,\l-1]^d$, then $c_{xy}(\eta) = c_{xy}(\restriction{\eta}{[-\l,\l]^d})$. By Observation \ref{obs:BPandKA}, $xy$ can not be pivotal. 

Assume $xy$ is pivotal. Then either $x$ or $y$ is in the bootstrap percolation cluster of the origin for either $\eta$ or $\eta^{xy}$, hence $\mathcal{B}$ must occur for either $\eta$ or $\eta^{xy}$.
\end{proof}
We can now estimate the right hand side of \eqref{variational_principle} for
our choice of $f$.

\begin{align*}
\sum_{x\neq0}\sum_{y\sim x}\mu_{0}\left[c_{xy}\left(f\left(\eta^{xy}\right)-f\left(\eta\right)\right)^{2}\right] & \le\sum_{x\neq0}\sum_{y\sim x}\mu_{0}\left[c_{xy}\,4\l^{2}\,\One_{f\left(\eta^{xy}\right)\neq f\left(\eta\right)}\right]\\
 & \le4\l^{2} \cdot 2d \ \#\{x \ | \ ||x||_{\infty} = \l \} \cdot  \sup_{x,y}\mu_{0}\left[\One_{x,y\text{ pivotal}}\right]\\
 & \le c l^{d+2} e^{-\gamma \l}.
\end{align*}

For the second term, fix $y \sim 0$, and assume first that $\eta, \eta^{0y} \notin \mathcal{B}$, and $c_{0y}=1$. In this case, the bootstrap percolation cluster of the origin
for $\tau_{y}\eta^{0y}$ is a translation by $-y$ of the bootstrap percolation cluster
for $\eta$. Therefore, the term $e_{1}\cdot y+f\left(\tau_{y}\eta^{0y}\right)-f\left(\eta\right)$
equals $0$. In the other case we bound $\left|f\left(\tau_{y}\eta^{0y}\right)-f\left(\eta\right)\right|\le2\l$
and $\left|u\cdot y\right|\le1$, obtaining 

\begin{align*}
\sum_{y\sim0}\mu_{0}\left[c_{0y}\left(u\cdot y+f\left(\tau_{y}\eta^{0y}\right)-f\left(\eta\right)\right)^{2}\right] & \le4d\,\left(2\l + 1\right)\mu_0(\mathcal{B})\\
 & \le c\l e^{-\gamma \l}.
\end{align*}
Summing both contributions now yields the expected bounds.\qed

\section{Further questions}
Theorem \ref{mainthm} shows how the self-diffusion constant decays as $q \to 0$ up to a constant for $k \ge 3$ and logarithmic correction for $k=2$. In \cite[equation (6.26)]{CphD}, it is conjectured for the case $k=d=2$ that the true behavior is $D(q) \approx \exp(-\gamma q^{-1})$ for $\gamma = \frac{\pi^2}{9}$. In view of recent works related to the Fredrickson-Andersen model \cite{HMT}, where similar scaling is observed and the exact constant $\gamma$ could be identified, it seems reasonable that such a result could also be obtained for the Kob-Andersen model.

The methods used here could also be applied in other models for which the combinatorial structure allows a construction of a $T$-step move as in Lemma \ref{existenceTstepmove}. In particular, it is natural to consider other kinetically constrained lattice gases, or even look for universality results on the self-diffusion coefficient.  

\section*{Acknowledgements}
We would like to thank Oriane Blondel and Cristina Toninelli for our useful discussions. 
A.S. acknowledges the support of the ERC Starting Grant 680275 MALIG. This project has been partially supported by the ANR grant LSD (ANR-15-CE40-0020). 

\bibliographystyle{plain}
\bibliography{bibliography}

\vskip 2cm  

\end{document}